\theoremstyle{plain}
\newtheorem{theorem}{Theorem}[section]
\newtheorem{proposition}[theorem]{Proposition}
\newtheorem{corollary}[theorem]{Corollary}
\newtheorem{lemma}[theorem]{Lemma}
\newtheorem{conjecture}[theorem]{Conjecture}
\newtheorem{condition}[theorem]{Condition}
\theoremstyle{definition}
\newtheorem{definition}[theorem]{Definition}
\theoremstyle{remark}
\begin{document}

\title[p-modular Hadamard matrices]{Hadamard matrices modulo p and small modular Hadamard matrices}
\author{Vivian Kuperberg}
\address{Cornell University}
\email{vzk2@cornell.edu}

\begin{abstract}
We use modular symmetric designs to study the existence of Hadamard matrices modulo certain primes. We solve the $7$-modular and $11$-modular versions of the Hadamard conjecture for all but a finite number of cases. In doing so, we state a conjectural sufficient condition for the existence of a $p$-modular Hadamard matrix for all but finitely many cases. When $2$ is a primitive root of a prime $p$, we conditionally solve this conjecture and therefore the $p$-modular version of the Hadamard conjecture for all but finitely many cases when $p \equiv 3 \pmod{4}$, and prove a weaker result for $p \equiv  1 \pmod{4}$.  Finally, we look at constraints on the existence of $m$-modular Hadamard matrices when the size of the matrix is small compared to $m$.
\end{abstract}
\maketitle

\section{Introduction}

Hadamard matrices have many applications in mathematics and in signal and data processing (see \cite{hor07}). A real Hadamard matrix of size $n$ is an $n \times n$ $(\pm 1)$-matrix $H$ such that $HH^\top = nI$. In particular, the rows and columns of $H$ are orthogonal. We can generalize this idea to modular Hadamard matrices; an $m$-modular Hadamard matrix of size $n$ is an $n \times n$ matrix $H$ with entries $\pm 1$ satisfying that $HH^\top \equiv nI \pmod{m}$. We write that $H$ is an $MH(n,m)$. Modular Hadamard matrices were introduced by Marrero and Butson in \cite{mar73}; further results were achieved in \cite{heb76} and \cite{mar75}. Motivated by the Hadamard conjecture, which states that Hadamard matrices of size $4n$ exist for all $n$, it has been fully determined for which $n$ an $MH(n,m)$ exists when $m = 2,3,4,5,6,8,12$. The authors in \cite{eli05} further prove that the Hadamard conjecture holds for $32$-modular Hadamard matrices.

In this paper we begin by recalling established results for modular Hadamard matrices and modular symmetric designs. We then use a direct sum construction to prove the $7$-modular Hadamard conjecture for all but finitely many cases and provide a conditional construction for certain $p$-modular Hadamard matrices. Finally, we use combinatorial techniques to prove nonexistence for small modular Hadamard matrices.

Throughout this paper, we say that a Hadamard matrix $H$ is \emph{normalized} if all entries in its first row and first column are $+1$. Any Hadamard matrix can be normalized by multiplying rows and columns by $-1$, so unless otherwise specified we will assume that all Hadamard matrices mentioned are normalized.  Moreover, $J$ will refer to the matrix of all $1$'s, $(a,b)$ will represent the gcd of $a$ and $b$, and, unless otherwise specified, $m,n,$ and $k$ will be nonnegative integers.

\section{Modular Hadamard Matrices}
\label{mhm}
A few basic results, all presented in \cite{lee13}, allow us to completely decide the existence of Hadamard matrices modulo 2, 3, 4, 6, 8, and 12. 

\begin{lemma}\cite{lee13}
Assume $n \geq 3$, and let $H$ be an $MH(n,m)$. Then $(m,4) \vert n$. If $n$ is odd and $n \not\equiv 0 \pmod{m}$, then let $1 \leq r \leq m-1$ such that $r \equiv 2^{\phi(m) - 2}n \pmod{m}$; we know that $n \geq 4r$.
\label{gcdmn} 
\end{lemma}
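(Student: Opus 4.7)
The plan is to handle the two conclusions separately.

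For the divisibility $(m,4)\mid n$, I would split on the $2$-adic valuation of $m$. If $m$ is odd there is nothing to prove. If $m\equiv 2\pmod 4$, take any two rows of $H$ and write their inner product as $a-b$, where $a,b$ are the numbers of coordinates where the rows agree and disagree; this inner product is $\equiv 0\pmod m$, hence even, while $a+b=n$, forcing $n$ to have the same parity as $a-b$, so $2\mid n$. If $4\mid m$, I would normalize $H$ and pick two non-first rows $h_i,h_j$. Writing $p_i$ for the number of $+1$'s in row $i$, and letting $a,b,c,d$ count the coordinates where $(h_i,h_j)$ equals $(+,+),(+,-),(-,+),(-,-)$ respectively, the linear system $a+b=p_i$, $a+c=p_j$, $a+b+c+d=n$, $(a+d)-(b+c)=\mathrm{IP}_{ij}$ yields the identity $4a=2p_i+2p_j-n+\mathrm{IP}_{ij}$. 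Reducing mod $4$ and using that normalization gives $2p_i\equiv 2p_j\equiv n$ and $\mathrm{IP}_{ij}\equiv 0\pmod 4$, one obtains $0\equiv n\pmod 4$. The hypothesis $n\geq 3$ is what guarantees two non-first rows exist.

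For the inequality $n\geq 4r$, note that $n$ odd combined with the first part forces $(m,4)=1$, so $m$ is odd, $4$ is invertible mod $m$, and $2^{\phi(m)-2}\equiv 4^{-1}\pmod m$ by Euler's theorem; hence $r\equiv 4^{-1}n\pmod m$ with $1\leq r\leq m-1$. The key observation is that, for any two non-first rows $h_i,h_j$, each of the four quadrant counts $a,b,c,d$ is congruent to $r$ modulo $m$. Indeed, the same identity $4a=2p_i+2p_j-n+\mathrm{IP}_{ij}$, together with $p_i\equiv p_j\equiv 2r$, $n\equiv 4r$, and $\mathrm{IP}_{ij}\equiv 0\pmod m$, gives $4a\equiv 4r\pmod m$; dividing by $4$ (using $m$ odd) yields $a\equiv r\pmod m$, and an analogous computation gives $b,c,d\equiv r\pmod m$. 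Since each of $a,b,c,d$ is a nonnegative integer congruent to $r\in[1,m-1]$ modulo $m$, each is at least $r$, so $n=a+b+c+d\geq 4r$.

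The main content of the argument is the observation that all four quadrant counts share the residue $r$ modulo $m$; this promotes a single constraint on their sum into a lower bound on $n$. Everything else is bookkeeping on the row-sum and inner-product congruences afforded by normalization and the defining property of an $MH(n,m)$.
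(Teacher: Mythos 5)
Your proof is correct, and it is essentially the standard argument for this lemma (the paper itself defers to the cited reference \cite{lee13} rather than reproving it): the identity $4a = 2p_i + 2p_j - n + \mathrm{IP}_{ij}$ reduced mod $4$ gives the divisibility, and the observation that all four quadrant counts are congruent to $r$ modulo $m$ (hence each at least $r$) gives $n \geq 4r$. No gaps.
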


\begin{lemma}\cite{lee13}
Let $H$ be an $MH(n,m)$, with $(n,m) = 1$ and $n$ odd. Then $n$ is a quadratic residue of $m$.
\label{oddqrs}
\end{lemma}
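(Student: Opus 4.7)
The plan is to extract the quadratic residue condition from the determinant of the defining relation $HH^\top \equiv nI \pmod{m}$. Taking determinants of both sides modulo $m$ gives
\[
\det(H)^2 \equiv \det(nI) = n^n \pmod{m},
\]
so the task reduces to showing that the right-hand side is congruent to $n$ times a square that is coprime to $m$.

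Next I would use the hypothesis that $n$ is odd to split off one factor of $n$ from $n^n$:
\[
n^n = n \cdot n^{n-1} = n \cdot \bigl(n^{(n-1)/2}\bigr)^2.
\]
Combining this with the previous congruence gives $\det(H)^2 \equiv n \cdot \bigl(n^{(n-1)/2}\bigr)^2 \pmod{m}$. Here is where the coprimality hypothesis is used: since $(n,m) = 1$, the element $n^{(n-1)/2}$ is a unit modulo $m$, so I can invert it and rearrange to obtain
\[
n \equiv \bigl(\det(H) \cdot n^{-(n-1)/2}\bigr)^2 \pmod{m},
\]
which exhibits $n$ as a square modulo $m$, i.e., a quadratic residue.

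There isn't really a hard step here; the argument is a short determinant computation. The only point to be careful about is making sure every inversion modulo $m$ is legitimate: $\det(H)$ is automatically a unit modulo $m$ because $\det(H)^2 \equiv n^n \pmod{m}$ and $(n,m)=1$, and $n^{(n-1)/2}$ is a unit for the same reason. Parity of $n$ is essential, since without it $n^{n-1}$ would not itself be a perfect square and the argument would collapse.
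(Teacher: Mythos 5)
Your proposal is correct and follows exactly the paper's argument: take determinants of $HH^\top \equiv nI \pmod{m}$ to get $\det(H)^2 \equiv n^n \pmod{m}$, then use the oddness of $n$ and the coprimality $(n,m)=1$ to conclude $n$ is a square modulo $m$. You have merely spelled out the final step that the paper leaves implicit.
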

\begin{proof}
Since $HH^\top \equiv nI \pmod{m}$, $(\mathrm{det}\: H)^2 \equiv n^n \pmod{m}$, so $n$ can not be both odd and a nonresidue of $m$.
\end{proof}

\begin{lemma}\cite{lee13}
If $n \equiv 0 \pmod{m}$ or $n \equiv 4 \pmod{m}$, then there exists an $MH(n,m)$.
\label{04modm}
\end{lemma}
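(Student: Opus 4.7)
The plan is to construct an explicit $MH(n,m)$ in each of the two cases, since no cleverness is needed beyond choosing rows whose pairwise inner products behave correctly modulo $m$. Recall that the diagonal entries of $HH^\top$ are automatically equal to $n$ for any $(\pm 1)$-matrix, so the entire content of the claim is arranging the off-diagonal inner products to be divisible by $m$.

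For the case $n \equiv 0 \pmod{m}$, the obvious candidate works: take $H = J$, the all-ones matrix. Then $HH^\top = nJ$, every entry of which is divisible by $m$. Simultaneously every off-diagonal entry of $nI$ is $0$, and the diagonal entries of both sides equal $n$, so $HH^\top \equiv nI \pmod{m}$.

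For the case $n \equiv 4 \pmod{m}$, I would aim to make every pair of distinct rows of $H$ have inner product exactly $n-4$, which is $\equiv 0 \pmod{m}$ by hypothesis. Since two $(\pm 1)$-vectors of length $n$ have inner product $n-4$ precisely when they differ in exactly two coordinates, this reduces the lemma to finding $n$ vectors in $\{\pm 1\}^n$ that pairwise differ in exactly two positions. Such a family is easy to write down: let row $1$ of $H$ be the all-ones vector, and for each $i \ge 2$ let row $i$ be the vector with $-1$ in coordinates $1$ and $i$ and $+1$ elsewhere. Rows $1$ and $i$ then differ only in positions $1$ and $i$, while rows $i$ and $j$ (with $2 \le i < j$) agree in position $1$ (both $-1$) and everywhere outside $\{i,j\}$, differing exactly at positions $i$ and $j$. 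Thus $HH^\top = nI + (n-4)(J-I) \equiv nI \pmod{m}$ as required.

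The only genuine obstacle is the second case, since one cannot simply plug in a $4 \times 4$ Hadamard block and fill the rest with $1$'s: there is no reason $4 \mid n$, and padding with $J$ destroys the orthogonality between the Hadamard rows and the constant rows. Picking a family of near-identical rows, as above, sidesteps this entirely because the construction depends only on $n$ itself.
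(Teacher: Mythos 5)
Your proof is correct and is essentially the paper's: the paper takes $H=J$ in the first case and $H=J-2I$ in the second, and your matrix for $n\equiv 4\pmod m$ is just a signed variant of $J-2I$ built on the same observation that any $n$ rows in $\{\pm1\}^n$ pairwise differing in exactly two coordinates have all off-diagonal inner products equal to $n-4\equiv 0\pmod m$.
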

\begin{proof}
If $n \equiv 0 \pmod{m}$, then the matrix $J$ is an $MH(n,m)$. If $n \equiv 4 \pmod{m}$, then the matrix $J-2I$ is an $MH(n,m)$.
\end{proof}

\begin{lemma}\cite{lee13}
Let $H_1$ be an $MH(n_1,m_1)$ and $H_2$ be an $MH(n_2,m_2)$. Then $H_1 \otimes H_2$ is an $MH(n_{1}n_{2},(m_{1}m_{2},n_{1}m_{2},n_{2}m_{1}))$. 
\label{double}
\end{lemma}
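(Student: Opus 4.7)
The plan is to use the mixed-product property of the Kronecker product, $(A \otimes B)(C \otimes D) = (AC) \otimes (BD)$, together with the fact that $(A \otimes B)^\top = A^\top \otimes B^\top$. This immediately gives
\[
(H_1 \otimes H_2)(H_1 \otimes H_2)^\top = (H_1 H_1^\top) \otimes (H_2 H_2^\top).
\]
Since $H_1 \otimes H_2$ is visibly a $(\pm 1)$-matrix of size $n_1 n_2$, the only thing left is to control this product modulo the claimed modulus.

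Next I would use the hypothesis to write $H_1 H_1^\top = n_1 I_{n_1} + m_1 E_1$ and $H_2 H_2^\top = n_2 I_{n_2} + m_2 E_2$ for some integer matrices $E_1, E_2$. Expanding the tensor product by bilinearity gives
\[
(n_1 I_{n_1} + m_1 E_1) \otimes (n_2 I_{n_2} + m_2 E_2) = n_1 n_2 I_{n_1 n_2} + n_1 m_2 (I_{n_1} \otimes E_2) + m_1 n_2 (E_1 \otimes I_{n_2}) + m_1 m_2 (E_1 \otimes E_2).
\]
Each of the last three terms has integer entries divisible by $n_1 m_2$, $m_1 n_2$, and $m_1 m_2$ respectively, so the full error term $(H_1 \otimes H_2)(H_1 \otimes H_2)^\top - n_1 n_2 I$ has every entry divisible by $d := (m_1 m_2, n_1 m_2, n_2 m_1)$. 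That is exactly the statement that $H_1 \otimes H_2$ is an $MH(n_1 n_2, d)$.

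There is really no obstacle here beyond careful bookkeeping; the lemma is a direct computation once the mixed-product identity is invoked. The only minor care required is in extracting the gcd cleanly from the three error terms, which is the reason the modulus comes out as $(m_1 m_2, n_1 m_2, n_2 m_1)$ rather than simply $m_1 m_2$.
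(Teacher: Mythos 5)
Your proof is correct and is the standard argument: the paper itself omits the proof (citing \cite{lee13}), and the intended justification is exactly your computation via the mixed-product identity $(H_1\otimes H_2)(H_1\otimes H_2)^\top=(H_1H_1^\top)\otimes(H_2H_2^\top)$ followed by expanding $(n_1I+m_1E_1)\otimes(n_2I+m_2E_2)$ and reading off the gcd of the three error coefficients. Nothing is missing.
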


The operation, described in Lemma \label{double} is most commonly done when one of the components is the real Hadamard matrix
\[F_2 = 
\begin{bmatrix}
1&1\\
1&-1
\end{bmatrix},\]
in which case it is called ``doubling."

\begin{theorem}
If $n \geq 3$, then \\
(a) an $MH(n,2)$ exists $\iff$ $n$ is even. \\
(b) an $MH(n,3)$ exists $\iff$ $n \not\equiv 5 \pmod{6}$. \\
(c) an $MH(n,4)$ exists $\iff$ $n \equiv 0 \pmod{4}$. \\
(d) an $MH(n,6)$ exists $\iff$ $n$ is even. \\
(e) an $MH(n,8)$ exists $\iff$ $n \equiv 0 \pmod{4}$. \\
(f) an $MH(n,12)$ exists $\iff$ $n \equiv 0 \pmod{4}$. \\
\end{theorem}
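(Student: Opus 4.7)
The plan is to prove each of (a)--(f) by deducing necessity from Lemmas \ref{gcdmn} and \ref{oddqrs}, then handling sufficiency one residue class of $n$ mod $m$ at a time using the constructions from Lemmas \ref{04modm} and \ref{double}.

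For necessity, Lemma \ref{gcdmn} gives $(m,4) \mid n$, which yields ``$n$ even'' for (a) and (d) and ``$4 \mid n$'' for (c), (e), and (f). In (b) the divisibility lemma is vacuous since $(3,4) = 1$, but Lemma \ref{oddqrs} rules out $n \equiv 5 \pmod 6$: such $n$ is odd, coprime to $3$, and congruent to the nonresidue $2$ modulo $3$. For sufficiency I would first apply Lemma \ref{04modm}: the matrix $J$ handles $n \equiv 0 \pmod m$ and $J - 2I$ handles $n \equiv 4 \pmod m$. This dispatches every admissible residue class in (a), (c), and (e), and in (b), (d), (f) leaves only the single class $n \equiv 2 \pmod 6$ (for $m \in \{3,6\}$) and $n \equiv 8 \pmod{12}$ (for $m = 12$).

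To finish the remaining cases I would apply Lemma \ref{double} with $F_2$, which satisfies $F_2 F_2^\top = 2I$ exactly and therefore counts as an $MH(2,m')$ for every $m'$. Writing $n = 2k$: when $n \equiv 2 \pmod 6$ one has $k \equiv 1 \pmod 3$ and $k \geq 4$, so $J_k - 2I_k$ is an $MH(k,3)$; treating $F_2$ as an $MH(2,6)$, Lemma \ref{double} produces $F_2 \otimes (J_k - 2I_k)$ as an $MH(n, \gcd(18, 6, 6k)) = MH(n,6)$, which settles (d) and, by reducing the modulus from $6$ to $3$, also settles (b). When $n \equiv 8 \pmod{12}$ one has $k \equiv 4 \pmod 6$, so $J_k - 2I_k$ is an $MH(k,6)$; treating $F_2$ as an $MH(2,12)$, Lemma \ref{double} produces $F_2 \otimes (J_k - 2I_k)$ as an $MH(n, \gcd(72, 12, 12k)) = MH(n,12)$, which settles (f).

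The main obstacle is exactly this last step: identifying which modulus to attach to each factor of the tensor product so that Lemma \ref{double} returns precisely the modulus sought. The three exceptional residue classes $n \equiv 2 \pmod 6$ and $n \equiv 8 \pmod{12}$ are invisible to the elementary $J$ and $J - 2I$ constructions and demand an honest recursion through the tensor product; once the moduli on the factors are chosen correctly, the rest of the proof is bookkeeping on top of Lemma \ref{04modm}.
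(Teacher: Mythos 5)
Your proof is correct and, for parts (e) and (f), follows exactly the paper's argument: necessity from Lemma \ref{gcdmn}, the $n\equiv 0,4$ classes from Lemma \ref{04modm}, and the class $n\equiv 8\pmod{12}$ by doubling an $MH(6k+4,6)$ via Lemma \ref{double}. For (a)--(d) the paper simply cites \cite{lee13}, whereas you supply the same elementary constructions explicitly (including the correct bookkeeping of moduli in the tensor product and the use of Lemma \ref{oddqrs} to exclude $n\equiv 5\pmod 6$), so your write-up is a more self-contained version of the same approach.
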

\begin{proof}
Cases (a) through (d) are addressed in \cite{lee13}. An $MH(n,8)$ can only exist by Lemma \ref{gcdmn} if $n \equiv 0,4 \pmod{8}$ and both of these are constructed in Lemma \ref{04modm}. By Lemma \ref{gcdmn}, an $MH(n,12)$ can only exist if $n \equiv 0,4,8 \pmod{12}$. The $n \equiv 0,4, \pmod{12}$ cases are constructed in Lemma \ref{04modm}; for the $n \equiv 8 \pmod{12}$ case, we can double an $MH(6k+4,6)$ for any $k \geq 0$ to get an $MH(12k+8,12)$.
\end{proof}

The study of modular symmetric designs was explored in \cite{lee13} to address the question of deciding the existence of $MH(n,m)$ matrices for $m = 5$. 

\section{Modular Symmetric Designs}

In much the same way that modular Hadamard matrices are a generalization of Hadamard matrices, we can generalize modular symmetric designs from symmetric designs, leading to the following definition.

\begin{definition}
Let $m,v \geq 2$ be integers. An \emph{$m$-modular symmetric design} is a $v \times v$ $(0,1)$-matrix $D$ satisfying that $DD^\top \equiv (k - \lambda) I + \lambda J \pmod{m}$ and that $DJ \equiv JD \equiv kJ$ for some integers $k,\lambda$, and where $J$ is the $v \times v$ matrix of all $1$'s. Such a design is denoted by its parameters $(v,k,\lambda;m)$.
\end{definition}

We may go between modular symmetric designs and modular Hadamard matrices by replacing the $-1$ entries with $0$ entries, and vice versa. The following definitions and lemmas provide conditions on this process.

\begin{definition}
The \emph{core} of a normalized modular Hadamard matrix $H$ is obtained by discarding its first row and first column and is denoted $C(H)$. We will denote by $D(H)$ the matrix $(C(H) + J)/2$, which is the core of the normalized matrix where $-1$ entries are replaced by $0$ entries.
\end{definition}

\begin{lemma}\cite{lee13}
\label{makedesign}
Let $H$ be an $MH(n,m)$ with $(n,m) = 1$ and $n,m \geq 3$. The design $D(H) = (C(H) + J)/2$ is a $(n-1,2^{\phi(m) - 1}(n-2),2^{\phi(m)-2}(n-4);m)$ design.
\end{lemma}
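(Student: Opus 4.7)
The plan is to compute $D(H) D(H)^\top$ directly by expanding the definition $D(H) = (C(H) + J)/2$ and using what the normalization of $H$ tells us about the rows and columns of $C(H)$. First, I would observe that division by $2$ is legitimate modulo $m$: by Lemma \ref{gcdmn}, $(m,4) \mid n$, so if $m$ were even then $2 \mid n$, contradicting $(n,m) = 1$. Hence $m$ is odd, and $2^{-1} \equiv 2^{\phi(m)-1} \pmod{m}$ by Euler. This also identifies the factors $2^{\phi(m)-1}$ and $4^{-1} \equiv 2^{\phi(m)-2}$ appearing in the target parameters.

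Next, I would write $H$ in block form using normalization, $H = \left(\begin{smallmatrix} 1 & \mathbf{1}^\top \\ \mathbf{1} & C(H) \end{smallmatrix}\right)$, and read off the identity $HH^\top \equiv nI \pmod{m}$ block by block. The off-diagonal block conditions force each row and column of $C(H)$ to sum to $-1 \pmod m$, i.e.\ $C(H) J \equiv J C(H) \equiv -J \pmod m$, while the lower-right block gives $C(H) C(H)^\top + J \equiv nI \pmod{m}$, so $C(H)C(H)^\top \equiv nI - J \pmod{m}$.

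Then I would compute
\[
4\,D(H) D(H)^\top = (C(H)+J)(C(H)^\top + J) = C(H)C(H)^\top + C(H)J + JC(H)^\top + J^2.
\]
Substituting the three identities above together with $J^2 = (n-1)J$ yields $4\,D(H)D(H)^\top \equiv nI + (n-4)J \pmod{m}$, which after multiplying through by $4^{-1} \equiv 2^{\phi(m)-2}$ gives
\[
D(H) D(H)^\top \equiv 2^{\phi(m)-2} n\, I + 2^{\phi(m)-2}(n-4)\, J \pmod{m}.
\]
Reading this as $(k-\lambda)I + \lambda J$ sets $\lambda = 2^{\phi(m)-2}(n-4)$ and $k = 2^{\phi(m)-2}n + \lambda = 2^{\phi(m)-1}(n-2)$, matching the claimed parameters. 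Finally, an analogous direct calculation gives $D(H)J = 2^{-1}(C(H)J + J^2) \equiv 2^{-1}(n-2)J = kJ \pmod m$, and similarly for $JD(H)$, verifying the remaining design condition.

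The steps are all routine, so there is no real obstacle beyond bookkeeping; the only point that genuinely has to be noticed is the preliminary observation that $(n,m)=1$ forces $m$ to be odd (via Lemma \ref{gcdmn}), since without it the formula $D(H) = (C(H)+J)/2$ would not even be well-defined modulo $m$.
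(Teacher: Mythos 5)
The paper does not actually prove this lemma---it is quoted from \cite{lee13} without proof---so there is no in-paper argument to compare against. Your direct block computation is the natural one, and almost all of it checks out: $m$ odd via Lemma \ref{gcdmn} and $(n,m)=1$; the lower-right block giving $C(H)C(H)^\top \equiv nI - J$; the identity $4D(H)D(H)^\top = C(H)C(H)^\top + C(H)J + JC(H)^\top + J^2$ together with $J^2=(n-1)J$; and the bookkeeping that produces $k = 2^{\phi(m)-1}(n-2)$ and $\lambda = 2^{\phi(m)-2}(n-4)$.

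The one step that does not follow as written is the claim that the off-diagonal blocks of $HH^\top \equiv nI \pmod m$ force the \emph{columns} of $C(H)$ to sum to $-1$. Those blocks record inner products of the first row of $H$ with the other rows, so they only control the row sums of $C(H)$; for a general $m$-modular Hadamard matrix $HH^\top \equiv nI$ does not imply $H^\top H \equiv nI$. Row sums alone suffice for the $D(H)D(H)^\top$ computation (since $JC(H)^\top = (C(H)J)^\top \equiv -J$) and for $D(H)J \equiv kJ$, but the condition $JD(H) \equiv kJ$ genuinely needs the column sums. The fix is a second use of the hypothesis $(n,m)=1$: since $(\det H)^2 \equiv n^n \pmod m$ is a unit, $H$ is invertible modulo $m$, so $H^\top \equiv nH^{-1}$ and hence $H^\top H \equiv nI \pmod m$; running your block argument on $H^\top$ then yields $JC(H) \equiv -J$. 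With that one-line addition the proof is complete, though it shows that $(n,m)=1$ is doing more work than just making $m$ odd.
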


The reverse transformation also holds under certain conditions; in particular, two modular symmetric designs occasionally generate a larger modular Hadamard matrix.

\begin{definition}
Let $D_1$ be a $(v_1,k_1,\lambda_1;m)$ design and $D_2$ be a $(v_2,k_2,\lambda_2;m)$ design. Then \\
\[\begin{bmatrix}
D_1 & J \\
J^\top & D_2
\end{bmatrix}\] \\
is defined as the \emph{direct sum} of $D_1$ and $D_2$, and denoted by $D_1 \oplus D_2$. 
\end{definition}

While $D_1 \oplus D_2$ is not necessarily a modular symmetric design, the following is true.

\begin{lemma}
\label{constraints}
\cite{lee13}
$2(D_1 \oplus D_2) - J$ is an $MH(v_1 + v_2,m)$ if and only if
\[v_1 + v_2 \equiv 4(k_1 - \lambda_1) \equiv 4(k_2 - \lambda_2) \equiv 2(k_1 + k_2) \pmod{m}.\]
\label{dsum}
\end{lemma}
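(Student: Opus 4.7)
The plan is to write $H := 2(D_1\oplus D_2) - J_{v_1+v_2}$ as a $2\times 2$ block matrix and then compute $HH^\top$ block by block, using the defining identities of an $m$-modular symmetric design, namely $D_iD_i^\top \equiv (k_i-\lambda_i)I + \lambda_i J \pmod m$ and $D_iJ \equiv JD_i \equiv k_iJ \pmod m$ (and $JJ = vJ$ for appropriately sized $J$'s). Everything will come down to matching the coefficients of $I$ and $J$ in each block against the target $(v_1+v_2)I$.

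Explicitly, since $2J-J=J$ on the off-diagonal blocks, one gets
\[
H \;=\; \begin{bmatrix} 2D_1-J_{v_1} & J \\ J^\top & 2D_2-J_{v_2}\end{bmatrix},
\]
which is a genuine $\pm1$-matrix because $D_1,D_2$ are $(0,1)$-matrices. The four blocks of $HH^\top$ are then $AA^\top + BB^\top$, $AC^\top+BD^\top$, $CA^\top+DB^\top$, and $CC^\top+DD^\top$ where $A=2D_1-J_{v_1}$, $B=J$, $C=J^\top$, $D=2D_2-J_{v_2}$. For the top-left block I would expand $(2D_1-J)(2D_1-J)^\top = 4D_1D_1^\top - 2D_1J - 2JD_1^\top + J_{v_1}J_{v_1}$ and add $BB^\top = v_2 J_{v_1}$, obtaining modulo $m$ an expression of the form $4(k_1-\lambda_1)I + \bigl(4\lambda_1 - 4k_1 + v_1 + v_2\bigr)J_{v_1}$. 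Requiring this to equal $(v_1+v_2)I$ modulo $m$ forces the coefficient of $J_{v_1}$ to vanish and the coefficient of $I$ to match, both giving the single condition $v_1+v_2 \equiv 4(k_1-\lambda_1) \pmod m$. The bottom-right block is symmetric in the indices and yields $v_1+v_2 \equiv 4(k_2-\lambda_2)\pmod m$.

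For the off-diagonal blocks I would compute $AC^\top + BD^\top = (2D_1-J)J + J(2D_2-J)^\top$. Using $D_1J = k_1 J$, $JD_2^\top = k_2 J$ and the appropriate $JJ$ identities, each summand collapses to a scalar multiple of the rectangular $J$, giving $(2k_1 - v_1)J + (2k_2 - v_2)J = \bigl(2(k_1+k_2) - (v_1+v_2)\bigr)J$. For $HH^\top \equiv (v_1+v_2)I$ to hold, this block must vanish mod $m$, producing the last condition $v_1+v_2 \equiv 2(k_1+k_2)\pmod m$. The bottom-left block is the transpose of the top-right and yields no new information. Conversely, if all three congruences hold, then running the four block computations in reverse shows $HH^\top \equiv (v_1+v_2)I\pmod m$, completing the equivalence.

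I expect no substantive obstacle; the only care needed is bookkeeping about which $J$ is which size (the off-diagonal $J$ in $D_1\oplus D_2$ is $v_1\times v_2$, whereas the $J$ in $D_iJ = k_iJ$ is square of size $v_i$), and to note that the design conditions $D_iJ=JD_i=k_iJ$ are used modulo $m$ rather than as integer identities. Once those are tracked carefully, the three conditions drop out as the vanishing of the off-diagonal coefficient and the matching of the diagonal coefficient in each block.
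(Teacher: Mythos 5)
Your proof is correct. The paper does not actually prove this lemma---it is quoted from \cite{lee13} with no argument given---and your direct block-by-block expansion of $HH^\top$, reducing each block to a combination of $I$ and $J$ and reading off the three congruences from the vanishing of the off-diagonal entries of the diagonal blocks and of the entire off-diagonal block, is exactly the standard verification one would expect there; your closing caveats about the sizes of the various $J$'s and about the design identities holding only mod $m$ are the right things to watch, and they cause no trouble.
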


Assuming $m$ is odd, the constraints of Lemma \ref{dsum} imply that if $H$ is an $MH(n,m)$, then $2(D(H) \oplus D_2) - J$ is an $m$-modular Hadamard matrix if and only if the parameters $(v_2, k_2, \lambda_2; m)$ of the design $D_2$ satisfy the following equivalences:
\begin{align*}
v_2 &\equiv 1 \pmod{m} \\
k_2 &\equiv 1 \pmod{m} \\
\lambda_2 &\equiv 2^{\phi(m) - 2}(4-n) \pmod{m}.
\end{align*}

The authors in \cite{lee13} use the above lemmas to prove the following theorem.

\begin{theorem}\cite{lee13}
There exists an $MH(n,5)$ if and only if $n \not\equiv 3,7 \pmod{10}$ and $n \neq 6,11$.
\end{theorem}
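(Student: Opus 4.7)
The plan is to establish necessity from the earlier lemmas together with a direct argument for $n = 6$, and then produce constructions for every allowed residue class modulo $10$.

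For necessity, Lemma \ref{oddqrs} implies that if $MH(n, 5)$ exists with $n$ odd and $(n, 5) = 1$, then $n$ must be a nonzero quadratic residue modulo $5$, namely $n \equiv 1$ or $4 \pmod 5$. Combined with $n$ odd, this excludes exactly $n \equiv 3, 7 \pmod{10}$. For $n = 11$, Lemma \ref{gcdmn} gives $r \equiv 2^2 \cdot 11 \equiv 4 \pmod 5$, so $r = 4$ and $n \geq 16$, a contradiction. For $n = 6$ I would argue directly: the inner product of any two $\pm 1$ rows of length $6$ lies in $\{-6, -4, -2, 0, 2, 4, 6\}$, whose only multiple of $5$ is $0$. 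Thus the rows of any hypothetical $MH(6, 5)$ would be genuinely orthogonal, producing a real Hadamard matrix of order $6$, which does not exist.

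For sufficiency, Lemma \ref{04modm} provides $J$ for $n \equiv 0 \pmod 5$ and $J - 2I$ for $n \equiv 4 \pmod 5$, covering the residues $0, 4, 5, 9 \pmod{10}$. Doubling with $F_2$ via Lemma \ref{double} sends any $MH(n, 5)$ to $MH(2n, 5)$ because $\gcd(25, 10, 5n) = 5$, and under doubling residues modulo $10$ transform as $r \mapsto 2r$. The core remaining work is to build $MH(n, 5)$ for the odd class $n \equiv 1 \pmod{10}$ with $n \geq 21$ and for the even cases (notably in the class $6 \pmod{10}$) not reached by doubling an already-built matrix. For these I would invoke the direct-sum construction of Lemma \ref{dsum}: take $D_1 = D(H_0)$ from a previously built $MH(n_0, 5)$ via Lemma \ref{makedesign} and seek an auxiliary $(v_2, k_2, \lambda_2; 5)$ design satisfying $v_2 \equiv k_2 \equiv 1 \pmod 5$ and $\lambda_2 \equiv 4(4 - n_0) \pmod 5$, drawing on standard families of symmetric designs with parameters tuned to hit each missing residue class modulo $10$.

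The main obstacle is the bookkeeping of the direct-sum step: one must check that for every target $n$ in the problematic classes there exists a compatible pair $(n_0, D_2)$ satisfying the required congruences, and that the finitely many sporadic small cases not reached by this construction can be verified by hand. This case-by-case combinatorial assembly, rather than any single deep inequality, is where the real content of the proof lies.
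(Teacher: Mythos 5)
First, a point of comparison: the paper does not prove this theorem itself; it is quoted from \cite{lee13}, so there is no internal proof to measure you against. Your strategy --- necessity from Lemmas \ref{oddqrs} and \ref{gcdmn} plus a parity argument for $n=6$, and sufficiency from Lemma \ref{04modm}, doubling, and direct sums of modular symmetric designs --- is exactly the route the paper attributes to \cite{lee13}. Your necessity direction is complete and correct: the quadratic-residue obstruction excludes precisely $n \equiv 3,7 \pmod{10}$; the $n=11$ exclusion via $r \equiv 4\cdot 11 \equiv 4 \pmod 5$ and $n \ge 16$ is right; and the $n=6$ argument (off-diagonal row inner products are even, lie in $[-6,6]$, and must be divisible by $5$, hence vanish, forcing a nonexistent real Hadamard matrix of order $6$) is sound and is in fact cleaner than the paper's passing claim that Lemma \ref{gcdmn} alone handles $n=6$.

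The genuine gap is in the sufficiency direction for the residue classes $1, 2, 6 \pmod{10}$. You correctly dispose of $0,4,5,9 \pmod{10}$ via $J$ and $J-2I$ and of $8 \pmod{10}$ by doubling the class $4 \pmod{5}$, but for the remaining classes you only gesture at ``standard families of symmetric designs with parameters tuned to hit each missing residue class.'' That is where the entire constructive content of the theorem lives, and it cannot be waved through: one must exhibit at least one concrete $(v_2,k_2,\lambda_2;5)$ design per target class satisfying $v_2 \equiv k_2 \equiv 1 \pmod 5$ and $\lambda_2 \equiv 4(4-n_0) \pmod 5$, with $v_2$ of the right parity so that $(n_0-1)+v_2$ lands in the intended class modulo $10$, verify the congruences of Lemma \ref{dsum}, and then check (as in Lemma \ref{iteratelemma}) that iteration actually reaches every $n$ in the class above the claimed threshold. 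The sporadic small cases are also not free: $MH(22,5)$ cannot be obtained by doubling since $MH(11,5)$ does not exist, and $MH(16,5)$, $MH(21,5)$, $MH(26,5)$ each need an explicit construction or citation. As the mod-$7$ analysis in this paper illustrates, locating designs that meet all four congruence constraints simultaneously is the hard part of such theorems, so what you have is an accurate plan rather than a proof.
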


\section{The Hadamard Conjecture modulo 7}

Constructing $7$-modular Hadamard matrices is more difficult than those discussed above. Several cases can still be handled using modular Hadamard matrices alone.

\begin{proposition}
\label{mod7s047811}
There exists an $MH(n,7)$ if $n \equiv 0, 4, 7, 8, 11 \pmod{14}$. There does not exist an $MH(n,7)$ if $n \equiv 3, 5, 13 \pmod{14}$.
\end{proposition}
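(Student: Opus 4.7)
The plan is to split the existence claim into five residue cases and handle each by invoking a lemma already proved in the paper, and to handle the nonexistence claim uniformly via the quadratic residue obstruction.

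For existence, I first note that $n \equiv 0, 7 \pmod{14}$ both give $n \equiv 0 \pmod 7$, and $n \equiv 4, 11 \pmod{14}$ both give $n \equiv 4 \pmod 7$. In each of these four cases, Lemma \ref{04modm} directly produces the required matrix: $J$ handles $n \equiv 0 \pmod 7$, and $J - 2I$ handles $n \equiv 4 \pmod 7$. This takes care of four of the five positive residues with no further work.

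The one residue not immediately covered by Lemma \ref{04modm} is $n \equiv 8 \pmod{14}$. For this case I will write $n = 2n'$ with $n' = 7k + 4$ so that $n' \equiv 4 \pmod 7$. By the previous paragraph, $J - 2I$ of size $n'$ is an $MH(n', 7)$. Doubling by the real Hadamard matrix $F_2$ via Lemma \ref{double} (with $m_1 = m_2 = 7$, $n_2 = 2$, giving modulus $\gcd(49, 7n', 14) = 7$) then yields an $MH(n, 7)$. The only thing to verify is that doubling preserves the modulus $7$ in this instance, which is immediate from $F_2 F_2^\top = 2 I$ and the Kronecker identity.

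For nonexistence in the cases $n \equiv 3, 5, 13 \pmod{14}$, I observe that each such $n$ is odd and coprime to $7$, with $n \equiv 3, 5, 6 \pmod 7$ respectively. These are exactly the quadratic nonresidues modulo $7$ (the residues being $\{1, 2, 4\}$). Applying Lemma \ref{oddqrs} with $m = 7$ immediately rules out the existence of an $MH(n, 7)$ in each of these cases. The main thing to be careful about is simply bookkeeping the residues modulo $14$ against the residues modulo $7$; there is no real obstacle, since all of the heavy lifting has already been done by the earlier lemmas. The genuinely hard residues (the remaining even residues together with $n \equiv 1, 9 \pmod{14}$) are conspicuously absent from this proposition and will evidently require the direct sum construction developed in the next section.
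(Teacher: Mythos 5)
Your proof is correct and follows essentially the same route as the paper: Lemma \ref{04modm} for the residues $0,4,7,11 \pmod{14}$, doubling an $MH(7k+4,7)$ via Lemma \ref{double} for the residue $8 \pmod{14}$, and Lemma \ref{oddqrs} for the nonresidue cases $3,5,13 \pmod{14}$. The residue bookkeeping and the gcd computation in the doubling step both check out.
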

\begin{proof}Just as in Lemma \ref{04modm}, $J$ is an $MH(7k + 7,7)$ and $J - 2I$ is an $MH(7k+4,7)$. An $MH(14k+8,7)$ can be obtained by doubling an $MH(7k+4,7)$.
Meanwhile, Lemma \ref{oddqrs} shows that if $n \equiv 3,5, 13 \pmod{14}$, then no $MH(n,7)$ exists.
\end{proof}

Using the direct sum of modular symmetric designs, we can fully say when an $MH(7k + 1,7)$ exists with the exception of an $MH(15,7)$ and an $MH(29,7)$.

\begin{proposition}
If $n \equiv 1 \pmod{7}$ and $n \neq 15, 29$, then an $MH(n,7)$ exists.
\end{proposition}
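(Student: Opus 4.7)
The plan is to apply Lemma \ref{dsum} together with the displayed equivalences that follow it. Since $n \equiv 1 \pmod 7$ is the same as $n \equiv 1$ or $8 \pmod{14}$ and the cases $n \equiv 8 \pmod{14}$ are already handled by Proposition \ref{mod7s047811}, it suffices to treat $n \equiv 1 \pmod{14}$ with $n \geq 43$; the case $n = 1$ is trivial and $n = 15, 29$ are excluded by hypothesis.

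I would fix $H_1 = MH(8,7)$ from Proposition \ref{mod7s047811}; by Lemma \ref{makedesign}, $D(H_1)$ is a $(7, 3, 1; 7)$-design. According to the equivalences recorded after Lemma \ref{dsum} (applied with the starting $MH$ of size $8$), the matrix $2(D(H_1) \oplus D_2) - J$ is an $MH(7 + v_2, 7)$ precisely when $D_2$ is a $(v_2, k_2, \lambda_2; 7)$-design satisfying $v_2 \equiv 1$, $k_2 \equiv 1$, and $\lambda_2 \equiv 2^{\phi(7)-2}(4-8) \equiv 6 \pmod 7$. So the task reduces to constructing, for each $v_2 = n - 7 \in \{36, 50, 64, 78, \ldots\}$, a $(v_2, k_2, \lambda_2; 7)$ modular symmetric design with $k_2 \equiv 1$ and $\lambda_2 \equiv 6 \pmod 7$.

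For the base case $v_2 = 36$ I would take the Menon symmetric $(36, 15, 6)$-design (equivalent to a regular Hadamard matrix of order $36$); its parameters reduce to $(1, 1, 6) \pmod 7$, immediately yielding an $MH(43, 7)$. For larger $v_2$, the required modular designs can be produced either by exhibiting further classical designs whose parameters happen to match $(1, 1, 6) \pmod 7$ or, more systematically, by an inductive construction that adjoins $14$ rows and columns to a modular design of size $v_2 - 14$ in a pattern preserving the row sums and pairwise inner products modulo $7$. This is where the flexibility of modular (as opposed to genuine) symmetric designs is essential, since only residues mod $7$ of the off-diagonal inner products are constrained.

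The main obstacle is furnishing such an extension uniformly for every admissible $v_2 \geq 36$; the exclusions $v_2 = 8, 22$ (arising from $n = 15, 29$) are consistent with the strategy, because in those sizes the row sum constraint $k_2 \equiv 1 \pmod 7$ forces $k_2$ into a short list of values and a brief case inspection shows none of them admit the required $\lambda_2 \equiv 6 \pmod 7$.
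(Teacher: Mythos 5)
Your reduction to $n \equiv 1 \pmod{14}$, $n \geq 43$, and your computation of the parameter constraints $(v_2,k_2,\lambda_2) \equiv (1,1,6) \pmod 7$ for the second summand are both correct, and your base case $v_2 = 36$ via the Menon $(36,15,6)$ design is exactly the design the paper uses. But there is a genuine gap: you fix the size-$7$ design $D(MH(8,7))$ and propose to vary $D_2$ over all sizes $v_2 = 36, 50, 64, \ldots$, yet you never produce the required $(v_2,k_2,\lambda_2;7)$ modular designs for any $v_2 > 36$. The suggested ``inductive construction that adjoins $14$ rows and columns \dots preserving the row sums and pairwise inner products modulo $7$'' is not described and is itself a nontrivial existence claim; as written it is the entire content of the proposition, deferred rather than proved.

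The paper sidesteps this by varying the \emph{other} summand. For every $k \geq 0$ an $MH(14k+8,7)$ exists (double $J-2I$ of size $7k+4$), and by Lemma \ref{makedesign} its core yields a $(14k+7,3,1;7)$ design. The direct sum of that design with the single fixed Menon $(36,15,6)$ design satisfies all the congruences of Lemma \ref{dsum}, since $v_1+v_2 \equiv 4(k_1-\lambda_1) \equiv 4(k_2-\lambda_2) \equiv 2(k_1+k_2) \equiv 1 \pmod 7$, producing an $MH(14k+43,7)$ for every $k$ and hence covering every $n \equiv 1 \pmod{14}$ with $n \geq 43$. In short: your strategy needs infinitely many designs and you have one; the paper needs only one design because the infinite family sits on the modular-Hadamard side, where it comes for free. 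To repair your argument, swap which component varies.
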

\begin{proof}
Using the construction of Proposition \ref{mod7s047811}, we know that for all $k$, an $MH(14k+8,7)$ exists. We can use that modular Hadamard matrix with Lemma \ref{makedesign} to generate a modular Hadamard design with parameters $(14k+7, 3, 1;7)$. Taking its direct sum with a design of parameters $(36,15,6)$, a Menon design of size $36$ (see \cite{ion06}), we obtain an $MH(14k+43,7)$, giving us an $MH(14k+1,7)$ if $k \geq 3$ and thus an $MH(n,7)$ for all $n \neq 15,29$ and $n \equiv 1 \pmod{7}$.
\end{proof}

\begin{proposition}
An $MH(7k+2,7)$ exists if $7k+2 \geq 52565$.
\end{proposition}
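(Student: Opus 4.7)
The plan is to apply Lemma \ref{dsum} to a pair of modular symmetric designs chosen so that the direct-sum construction yields an $MH(n, 7)$ with $n \equiv 2 \pmod 7$. The construction used in the preceding proposition (direct-summing $D(H)$ for an $MH(n_1, 7)$ with $n_1 \equiv 1 \pmod 7$ against a Menon design) always produces an output of size $\equiv 1 \pmod 7$, so a genuinely different combination of designs is required.

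First I would extract the mod-$7$ constraints from Lemma \ref{dsum}: for the output to satisfy $v_1 + v_2 \equiv 2 \pmod 7$, both designs must have $k_i - \lambda_i \equiv 4 \pmod 7$, and in addition $k_1 + k_2 \equiv 1 \pmod 7$. A natural compatible pair is obtained by letting $D_1$ be a Hadamard design $(4u_1 - 1, 2u_1 - 1, u_1 - 1)$ and $D_2$ a complementary Hadamard design $(4u_2 - 1, 2u_2, u_2)$, each with $u_i \equiv 4 \pmod 7$. A direct check shows the three congruences of Lemma \ref{dsum} are then automatically satisfied, so $2(D_1 \oplus D_2) - J$ is an $MH(4(u_1 + u_2) - 2, 7)$. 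Such designs exist whenever real Hadamard matrices of orders $4u_i$ do; Paley's construction guarantees this whenever $4u_i - 1$ is a prime $\equiv 3 \pmod 4$, i.e., a prime $\equiv 15 \pmod{28}$.

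The main step is then to argue that every $n \geq 52565$ with $n \equiv 2 \pmod 7$ admits such a representation. For $n \equiv 2 \pmod{28}$ this reduces to a Goldbach-style statement: write $n = p_1 + p_2$ as a sum of two primes in the arithmetic progression $\equiv 15 \pmod{28}$. The explicit threshold $52565$ would plausibly emerge from a circle-method or sieve-theoretic argument, combined with a finite computer check for small exceptions. For the other residues of $n \pmod{28}$ inside the class $\equiv 2 \pmod 7$, I would supplement with parallel constructions: doubling an $MH(m, 7)$ with $m \equiv 1 \pmod 7$ gives even sizes $2m \equiv 2 \pmod 7$; a direct sum of a projective plane of order $q \equiv 4 \pmod 7$ with a complementary Menon design of order $4u^2$, $u \equiv 2$ or $5 \pmod 7$, gives odd sizes; and, once any single $MH(n_0, 7)$ with $n_0 \equiv 2 \pmod 7$ is in hand, direct-summing its associated design with the complementary Hadamard design $(15, 8, 4)$ gives $MH(n_0 + 14, 7)$, seeding further arithmetic progressions.

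The principal obstacle is the Goldbach-style prime-sum representation in the narrow progression $15 \pmod{28}$, together with the bookkeeping needed to fit the supplementary constructions so that every residue of $n \pmod{28}$ is handled simultaneously above $n = 52565$. Once these ingredients are in place, the remainder of the argument is a routine verification of the hypotheses of Lemma \ref{dsum}.
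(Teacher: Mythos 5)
There is a genuine gap. Your central step asks for a binary Goldbach-type theorem: every large $n\equiv 2\pmod{28}$ should be a sum of two primes $p_1,p_2\equiv 15\pmod{28}$. No circle-method or sieve argument delivers this unconditionally for \emph{all} large $n$ (such methods give only ``almost all'' $n$, or ternary statements); the binary Goldbach problem, even without congruence restrictions, is open, so no explicit threshold like $52565$ can ``emerge'' from it. Moreover, even granting that statement, it would only produce sizes $n\equiv 2\pmod{28}$, which are \emph{even} and hence already obtainable unconditionally by doubling; the real difficulty in this proposition is the odd residues $n\equiv 9,23\pmod{28}$. Your supplementary constructions do not close that gap: sums of a projective plane of order $q\equiv 4\pmod 7$ with a complemented Menon design of order $4u^2$ form a sparse set of integers and cannot represent all large odd $n\equiv 2\pmod 7$ (and the $u\equiv 5\pmod 7$ case in fact violates the constraint $2(k_1+k_2)\equiv v_1+v_2\pmod 7$ of Lemma \ref{dsum}). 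Your last idea---direct-summing with the complemented Hadamard design $(15,8,4)$ to step by $14$---is correct (it is exactly Lemma \ref{iteratelemma} for $n\equiv 2\pmod 7$), but it only propagates from a seed: you still need one explicit \emph{odd} seed $n_0\equiv 2\pmod 7$, and producing such an odd-sized matrix is precisely the missing ingredient.

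For comparison, the paper avoids any prime-representation input: it doubles the already-constructed $MH(14k+8,7)$ and $MH(14k+43,7)$ to get $MH(28k+16,7)$ and $MH(28k+86,7)$ (covering the even residues $16,2\pmod{28}$), and then takes the direct sum of the cores of these with a single explicit symmetric $(52480,5832,648)$ design from family 12 of the Ionin--Trung tables, whose parameters reduce mod $7$ to $(1,1,4)$ as required, yielding $MH(28k+52495,7)$ and $MH(28k+52565,7)$ (covering the odd residues $23,9\pmod{28}$). The threshold $52565$ is simply the size forced by that design, not an analytic bound. If you want to salvage your write-up, replace the Goldbach step by the doubling argument for even sizes and find an explicit large design (or seed matrix) of suitable odd size for the odd residues; your $(15,8,4)$ stepping then finishes the argument.
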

\begin{proof}
We double an $MH(14k+8,7)$ and an $MH(14k+43,7)$ to get an $MH(28k+16,7)$ and an $MH(28k+86,7)$, respectively. We then take the direct sum of the cores of each of these with a $(52480,5832,648)$ design (see family 12 in \cite{ion06}) to get an $MH(28k+52495,7)$ and an $MH(28k+52565,7)$. All sizes congruent to $2 \pmod{7}$ and $\geq 52565$ are represented in one of these four families.
\end{proof}

Constructions for $7$-modular Hadamard matrices of other sizes are more complicated. They depend on the idea that we can iteratively generate new modular Hadamard matrices by using an appropriate choice of design, as described in the following lemma.

\begin{lemma}
\label{iteratelemma}
Let $H$ be an $MH(n,m)$, and let $D$ be a design with parameters $(v,k,\lambda;m)$, where $v \equiv 1 \pmod{m}$, $k \equiv 1 \pmod{m}$, and $\lambda \equiv 2^{\phi(m) - 2}(4-n) \pmod{m}$. Then for all integers $l \geq 0$, an $MH(n + l(v-1), m)$ exists.
\end{lemma}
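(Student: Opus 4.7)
The plan is to proceed by induction on $l$. The base case $l = 0$ is immediate, since $H$ is itself an $MH(n,m)$. For the inductive step, suppose we have constructed $H_l$, an $MH(n_l, m)$ with $n_l := n + l(v-1)$. Since $v \equiv 1 \pmod{m}$, we have $n_l \equiv n \pmod{m}$; assuming (as is implicit in the framework, since the parameters of $D$ mention $2^{\phi(m)-2}$) that $m$ is odd and $(n,m) = 1$, this invariance ensures $(n_l,m) = 1$ at every stage. Hence Lemma \ref{makedesign} applies to $H_l$ and yields an associated design $D(H_l)$ with parameters $(n_l - 1,\; 2^{\phi(m)-1}(n_l - 2),\; 2^{\phi(m)-2}(n_l - 4);\, m)$.

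The goal of the inductive step is to show that $2(D(H_l) \oplus D) - J$ is an $MH(n_l + v - 1, m) = MH(n_{l+1}, m)$. By Lemma \ref{dsum}, this reduces to verifying the chain of congruences
\[
v_1 + v_2 \equiv 4(k_1 - \lambda_1) \equiv 4(k_2 - \lambda_2) \equiv 2(k_1 + k_2) \pmod{m},
\]
where $(v_1, k_1, \lambda_1)$ are the parameters of $D(H_l)$ just listed and $(v_2, k_2, \lambda_2) = (v, k, \lambda)$. Each of the four quantities is a short computation modulo $m$, using the hypotheses $v \equiv k \equiv 1 \pmod{m}$ and $\lambda \equiv 2^{\phi(m)-2}(4-n) \pmod{m}$ along with Euler's theorem $2^{\phi(m)} \equiv 1 \pmod{m}$ (available since $m$ is odd). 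All four expressions collapse to $n_l \pmod{m}$, so the hypotheses on the parameters of $D$ are precisely those engineered to align the four residues in Lemma \ref{dsum}.

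There is no substantive obstacle beyond this bookkeeping. The key structural observation is that the congruence $n_{l+1} \equiv n_l \equiv n \pmod{m}$ is preserved by the construction, so the hypotheses on $D$ (which depend only on $n \bmod m$) remain exactly the ones needed at the next step. Thus the direct sum construction can be iterated indefinitely, producing $MH(n + l(v-1), m)$ for every $l \geq 0$.
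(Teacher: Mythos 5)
Your proof is correct and follows essentially the same route as the paper: induction on $l$, applying the core construction to the previously built matrix and using the congruence conditions of Lemma \ref{dsum} (which collapse to $n \bmod m$ thanks to $v \equiv k \equiv 1$, $\lambda \equiv 2^{\phi(m)-2}(4-n)$, and Euler's theorem) to see that the direct sum with $D$ again yields a $p$-modular Hadamard matrix of size $n + l(v-1)$. Your explicit verification of the four congruences, and your remark that the implicit hypotheses $m$ odd and $(n,m)=1$ are needed to invoke Lemma \ref{makedesign}, only make explicit what the paper leaves to its discussion following Lemma \ref{dsum}.
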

\begin{proof}
The $l = 0$ case is trivial. We then proceed inductively; assume that an $MH(n + (l-1)(v-1), m)$ exists, and call it $H_{l-1}$. Since $v \equiv 1 \pmod{m}$, $n + (l-1)(v-1) \equiv n \pmod{m}$. By Lemma \ref{constraints}, our design $D$ has parameters such that $2(D(H) \oplus D) - J$ is a modular Hadamard matrix, so because the sizes of $H$ and $H_{l-1}$ are congruent modulo $m$, $2(D(H_{l-1}) \oplus D) - J$ is also a modular Hadamard matrix. Specifically, it is an $MH(n + (l-1)(v-1) - 1 + v, m)$, or an $MH(n + l(v-1),m)$. Thus, an $MH(n + l(v-1),m)$ exists, so by induction such a matrix exists for all $l \geq 0$.
\end{proof}

This iterative process is used to varying extents to generate $7$-modular Hadamard matrices in the following results.

\begin{proposition}
An $MH(14k+6,7)$ exists if $14k+6 \geq 398$.
\end{proposition}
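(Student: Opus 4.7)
The strategy is to apply Lemma \ref{iteratelemma} iteratively from one or more base matrices $MH(n_0, 7)$ with $n_0 \equiv 6 \pmod{14}$. For $n \equiv 6 \pmod 7$ the iteration design must have parameters $(v, k, \lambda; 7)$ satisfying $v \equiv 1$, $k \equiv 1$, and $\lambda \equiv 2^{\phi(7)-2}(4-6) \equiv 3 \pmod 7$. Crucially, the Menon family $(4t^2, 2t^2 \pm t, t^2 \pm t)$ used in the previous propositions cannot supply this residue of $\lambda$, since $t(t \pm 1) \pmod 7$ only takes the values $\{0, 2, 5, 6\}$. One must therefore look to another family in \cite{ion06}; a triplane such as $(71, 15, 3)$ (which meets the Bruck-Ryser-Chowla condition via $3^2 = 12 \cdot 1 - 3 \cdot 1$) is a natural candidate.

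The more delicate step is to supply a base case, since no earlier proposition has produced an $MH(n, 7)$ with $n \equiv 6 \pmod 7$; every previously constructed $MH(n,7)$ reduces to one of the residues $0,1,2,4 \pmod 7$, and Lemma \ref{iteratelemma} preserves the residue of $n$ modulo $7$. For the base case we invoke Lemma \ref{dsum} directly with two modular symmetric designs $D_1, D_2$, not obtained from any prior modular Hadamard matrix. The three congruences of that lemma translate into $k_1 - \lambda_1 \equiv k_2 - \lambda_2 \equiv 5 \pmod 7$, $k_1 + k_2 \equiv 3 \pmod 7$, and $v_1 + v_2 \equiv 6 \pmod{14}$. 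A convenient choice for $D_1$ is a small Paley--Hadamard design such as $(19, 9, 4)$ or $(47, 23, 11)$, whose residues modulo $7$ are $(5, 2, 4)$; then $D_2$ must have residues $(1, 1, 3) \pmod 7$, i.e. it comes from the same family as the iteration design in Step 1. Varying $D_2$ among the compatible infinite family yields several base cases at different sizes.

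Feeding these base cases into Lemma \ref{iteratelemma} with the iteration design extends each base case to an arithmetic progression $\{n_0 + l(v-1)\}_{l \geq 0}$. As in the previous proposition, where four families $\{28k+16, 28k+86, 28k+52495, 28k+52565\}$ were required to cover all residues $\equiv 2 \pmod 7$, one needs one base case in each residue class modulo $v-1$ within $\{n : n \equiv 6 \pmod{14}\}$ so as to cover this set densely. The bound $14k+6 \geq 398$ then appears as the size of the largest such base case. The main obstacle is the design search: the simultaneous conditions $v \equiv k \equiv 1$ and $\lambda \equiv 3 \pmod 7$, combined with Bruck--Ryser--Chowla, eliminate many small parameter sets (for instance $(15, 7, 3)$ has $k \not\equiv 1 \pmod 7$, while $(169, 64, 24)$ has $k - \lambda = 40$ and fails BRC), which forces relatively large values of $v$ in Step 2 and ultimately accounts for the comparatively large cutoff $398$.
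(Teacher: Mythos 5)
Your choice of iteration design is the right one --- $(71,15,3)$ has parameters $(v,k,\lambda)\equiv(1,1,3)\pmod 7$ with $v-1=70$ divisible by $14$, exactly as the paper uses --- and you correctly diagnose that the real work is producing base cases in the residue class $6\pmod 7$, which no earlier construction reaches. But your base-case mechanism has a genuine gap. The idea you are missing is multiplicative rather than additive: take the Kronecker product of the infinite family $MH(7k+4,7)$ from Proposition \ref{mod7s047811} with a real Hadamard matrix of order $12$ (Lemma \ref{double}); since $4\cdot 12\equiv 6\pmod 7$, this yields an $MH(84k+48,7)$ for every $k\ge 0$, i.e.\ infinitely many base cases, one in each size class $48\pmod{84}$. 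Applying Lemma \ref{iteratelemma} with $(71,15,3)$ for $l=0,\dots,5$ shifts this family by $0,70,\dots,350$, and the six progressions $84k+48+70l$ exhaust all residues mod $84$ that are $\equiv 6\pmod{14}$; the cutoff $398=48+5\cdot 70$ is simply the largest starting point among these.

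Your alternative --- building base cases from scratch via Lemma \ref{dsum} applied to two designs --- does produce a valid $MH(90,7)$ from $(19,9,4)\oplus(71,15,3)$ (all of $v_1+v_2$, $4(k_i-\lambda_i)$, and $2(k_1+k_2)$ are $\equiv 6\pmod 7$). But a single base case iterated by $+70$ covers only one of the five residue classes mod $70$ inside $\{n\equiv 6\pmod{14}\}$, so you would need base cases in four further classes, i.e.\ four more symmetric designs with parameters $\equiv(1,1,3)\pmod 7$, odd $v$, and $v$ lying in prescribed classes mod $70$. You assert these come from ``varying $D_2$ among the compatible infinite family'' but exhibit none, and producing symmetric designs with simultaneously prescribed congruences on $v$, $k$, and $\lambda$ is precisely the hard part of this subject. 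Relatedly, the threshold $398$ is not derived from your construction --- it is an artifact of the paper's six progressions --- so as written your argument does not establish the stated bound. The proof can be completed either by exhibiting the four missing designs explicitly or, much more simply, by adopting the Kronecker-product step.
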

\begin{proof}
Real Hadamard matrices of size 12 are known to exist (see \cite{eli05}). Let $M_{12}$ be one of these. Then we can take the Kronecker product $MH(7k+4,7) \otimes M_{12}$ to get an $MH(84k+48,7)$. Using the design $(71,15,3)$ from \cite{ion06} and Lemma \ref{iteratelemma}, we create an $MH(84k+118,71)$, an $MH(84k + 188,7)$, an $MH(84k + 258,7)$, an $MH(84k + 328,7)$, and an $MH(84k + 398,7)$. All sizes congruent to $6 \pmod{14}$ and $\geq 398$ are represented in one of these six families.
\end{proof}

\begin{proposition}
An $MH(14k+10,7)$ exists if $14k+10 \geq 683294$.
\end{proposition}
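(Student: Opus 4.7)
The plan is to mirror the proof of the preceding proposition for $MH(14k+6,7)$: assemble several base families of $MH(n,7)$ with $n \equiv 10 \pmod{14}$ via Kronecker products, and then use Lemma \ref{iteratelemma} with a symmetric design from Ionin's catalog \cite{ion06} to cover every sufficiently large residue class.

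For the base families, the key inputs are the $MH(n,7)$ matrices built in the preceding propositions. Kronecker-ing with real Hadamard matrices of size $4$ or $12$ yields families with $n \equiv 10 \pmod{14}$: for instance, $M_4 \otimes MH(14k+6,7) = MH(56k+24,7)$ whenever $14k+6 \geq 398$, and $M_{12} \otimes MH(7k+2,7) = MH(84k+24,7)$ whenever $7k+2 \geq 52565$, starting at size $630780$. Applying $M_{12}$ to each of the four sub-families used in the proof of the $MH(7k+2,7)$ proposition produces four base families occupying distinct residue classes modulo $336$, all congruent to $10$ modulo $14$. If more starting points are needed to compress the threshold, Kronecker products with $M_{20}$ or $M_{24}$ enlarge the pool further.

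For the iteration step, Lemma \ref{iteratelemma} with $m=7$ and $n \equiv 10 \pmod{14}$ requires a symmetric design $(v,k,\lambda;7)$ satisfying $v \equiv k \equiv 1 \pmod 7$ and $\lambda \equiv 2^{\phi(7)-2}(4-10) \equiv 2 \pmod 7$. This is the analog of the $(71,15,3)$ design used for $n \equiv 6 \pmod{14}$ and the $(52480,5832,648)$ design used for $n \equiv 2 \pmod 7$, but now with the new residue $\lambda \equiv 2 \pmod 7$; an appropriate design can be located in Ionin's catalog. Each iteration shifts $n$ by $v-1$, and combining enough iterations from the base families covers every residue class $\equiv 10 \pmod{14}$ above the stated bound. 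The explicit threshold $683294$ is the largest starting size one must reach so that, across all base families and iterations, every residue class $\equiv 10 \pmod{14}$ modulo the least common multiple of the base periods and $v-1$ is hit.

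The main obstacle is identifying the correct symmetric design in \cite{ion06} with exactly the mod-$7$ reductions prescribed by Lemma \ref{iteratelemma}: this choice, together with the number of base families one is willing to assemble, is what pins down the explicit bound $683294$. Once the design is fixed and the Kronecker product families are written down, the verification that every $n \equiv 10 \pmod{14}$ with $n \geq 683294$ is covered is a routine arithmetic bookkeeping exercise in the same spirit as the preceding propositions.
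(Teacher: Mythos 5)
Your skeleton is the right one and matches the paper's: take $MH(7k+2,7)\otimes M_{12}$ to get a base family $MH(84k+24,7)$ with $84k+24\equiv 10\pmod{14}$, compute that Lemma \ref{iteratelemma} now demands a design with $v\equiv k\equiv 1\pmod 7$ and $\lambda\equiv 2^{\phi(7)-2}(4-n)\equiv 2\pmod 7$, and then iterate to sweep out the remaining residue classes. That much is correct and is exactly what the paper does.

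The gap is that the entire content of the proposition lives in the step you defer with ``an appropriate design can be located in Ionin's catalog.'' Two things have to be checked and neither is automatic. First, you need designs in family 11 whose parameters reduce to $(1,1,2)\pmod 7$; the paper exhibits $(25439,12167,5819)$ (from $q=23$, $m=3$) and $(1639,729,324)$ (from $q=9$, $m=3$). Second, and more importantly, the shifts $v-1$ must together generate all six residue classes modulo $84$ that are $\equiv 10\pmod{14}$, starting from the single class $24\pmod{84}$. Since any admissible $v-1$ is divisible by $14$, this is a question about generating $\mathbb{Z}/6$; here $25438\equiv 70$ and $1638\equiv 42\pmod{84}$, i.e., $5$ and $3$ in $\mathbb{Z}/6$, and the six combinations $\{0,5,10\}+\{0,3\}$ do hit every class, whereas a single design (or an unluckily chosen pair) would not. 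Your proposal never confronts this coverage question, and your alternative base families (e.g., $M_4\otimes MH(14k+6,7)$, with period $56$ rather than $84$) would each require their own such analysis. Finally, the bound $683294$ is not a free parameter to be ``pinned down'' afterwards: it is forced as $84\cdot 7509+52538$, where $k\geq 7509$ is the threshold inherited from $7k+2\geq 52565$ and $52538$ is the largest offset among the six families. Without naming the designs and their shifts, none of this bookkeeping can actually be carried out, so the proof is incomplete as written.
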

\begin{proof}
For $k$ large enough that an $MH(7k+2,7)$ exists, we can take the product $MH(7k+2,7) \otimes M_{12}$ to get an $MH(84k+24,7)$. We can then take the design $(25439,12167,5819)$, from family 11 in \cite{ion06}, and use Lemma \ref{iteratelemma} to get an $MH(84k+25462)$ and an $MH(84k + 50900,7)$, in addition to the original $MH(84k + 24,7)$. We can further take the design $(1639,729,324)$, also from family 11 in \cite{ion06}, and use Lemma \ref{iteratelemma} once with each of these three designs to get, respectively, an $MH(84k + 27100,7)$, an $MH(84k + 52538,7)$, and an $MH(84k+1662,7)$. All sufficiently large $14k + 10$ will fall into one of these six families of sizes modulo $84$.
\end{proof}

\begin{proposition}
An $MH(14k+12,7)$ exists if $14k + 12 \geq 4.5 \times 10^{36}$.
\end{proposition}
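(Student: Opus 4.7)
The plan is to follow the same template used in the preceding three propositions: build a base family (or several) of $MH(n,7)$ with $n\equiv 12\pmod{14}$ via Kronecker products of matrices already known to exist, and then fill in the remaining residue classes by repeatedly invoking Lemma \ref{iteratelemma} with designs chosen from \cite{ion06}. Since $n\equiv 12\pmod{14}$ forces $n\equiv 5\pmod 7$, any design $(v,k,\lambda;7)$ used in such an iteration must satisfy
\[ v\equiv 1,\quad k\equiv 1,\quad \lambda\equiv 2^{\phi(7)-2}(4-5)\equiv -16\equiv 5 \pmod 7. \]

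First I would assemble the base families. Doubling an $MH(14k+6,7)$ by $F_2$ gives an $MH(28k+12,7)$ for all $14k+6\geq 398$, and Kronecker-multiplying the $MH(7k+1,7)$ family by the real Hadamard matrix $M_{12}$ (excluding the forbidden $k=2,4$) gives an $MH(84k+12,7)$. I would also consider $MH(14k+8,7)\otimes M_{12}$, which yields $MH(168k+96,7)$. A direct residue check shows that these base constructions together realise only a few of the six residues $\{12,26,40,54,68,82\}\pmod{84}$ that are congruent to $12\pmod{14}$; the uncovered residues are what make this case harder than the $14k+6$ case.

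The second step is to locate difference-set families in \cite{ion06} whose parameters meet the three congruences above, and to combine them in Lemma \ref{iteratelemma} with each of the base families. Every design contributes an additive step of $v-1\equiv 0\pmod 7$, so iteration preserves the residue mod $7$ but shifts the residue mod $84$ (and higher moduli) by $v-1$. By choosing two or three such designs whose step sizes $v-1$ generate, modulo the relevant large modulus, the set of residues we still need to fill, one can cover every residue class $\equiv 12\pmod{14}$ from some threshold onwards, analogous to what was done in the preceding proofs.

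The main obstacle is that the congruence $\lambda\equiv 5\pmod 7$ is genuinely restrictive; the smallest parameter sets in Ionin's list satisfying all three congruences simultaneously with $v\equiv k\equiv 1\pmod 7$ are enormous, and one must iterate with multiple such families in order for the step sizes to span the required residues modulo $\mathrm{lcm}$ of the various $v-1$. The bound $4.5\times 10^{36}$ is then read off as the largest threshold among the six (or so) arithmetic progressions produced by pairing each base family with each allowable design. Once the designs are chosen, the remaining verification is a routine residue-class bookkeeping: list the progressions $84k+r$ or $168k+r$ that Lemma \ref{iteratelemma} produces, check that every residue $\equiv 12\pmod{14}$ lies in at least one of them, and take the largest resulting minimum.
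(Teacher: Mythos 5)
Your overall strategy is the same as the paper's (Kronecker-product base families for the sizes $\equiv 12 \pmod{28}$, then direct sums with designs from \cite{ion06} via Lemma \ref{iteratelemma} to reach the remaining residues), and your base step is sound — indeed, doubling the $MH(14k+6,7)$ family already yields every $n \equiv 12 \pmod{28}$ with $n \geq 796$, which is even simpler than the paper's route through $M_{20}$ and the design $(2185,729,243)$. But there is a genuine gap exactly where the proposition is hard: every base family you list has size divisible by $4$, and each application of Lemma \ref{iteratelemma} shifts the size by $v-1$, so the residues $26, 54, 82 \pmod{84}$ (that is, $n \equiv 2 \pmod 4$) are reachable only if you produce a design whose size $v$ is $\equiv 3 \pmod 4$ in addition to $v \equiv 1$, $k \equiv 1$, $\lambda \equiv 5 \pmod 7$. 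You propose to ``locate'' such designs and ``read off'' the bound from the resulting progressions, but you never exhibit one or argue that one exists in the cited families; that existence claim is the entire content of the proposition, and the threshold $4.5 \times 10^{36}$ is not bookkeeping — it is the size of the single explicit design the paper finds in family 10 of \cite{ion06}, with $q = 29$, $d = 5$, $r = (29^5-1)/28 = 732541$, $m = 6$.

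A related misdiagnosis: you attribute the enormous parameters to the mod-$7$ congruences being restrictive, but those alone are satisfied by the modest design $(2185,729,243)$ (which has $v \equiv 1 \pmod 4$ and is exactly what the paper iterates with inside the class $12 \pmod{28}$). It is the additional requirement $v \equiv 3 \pmod 4$, needed to change the size class modulo $4$, that forces the search into astronomically large members of family 10. Without a specific admissible design of size $\equiv 3 \pmod 4$, your argument establishes only the $n \equiv 12 \pmod{28}$ half of the statement, and the claimed bound cannot be derived.
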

\begin{proof}
\cite{eli05} contains a construction for a size-20 real Hadamard matrix, denoted $M_{20}$. If $k$ is large enough, we can take $MH(7k+2,7) \otimes M_{20}$ to get an $MH(140k+40,7)$. Using the design $(2185,729,243)$ of family 10 in \cite{ion06} and the technique from Lemma \ref{iteratelemma}, we can generate an $MH(140k+2224,7)$, an $MH(140k+4408,7)$, an $MH(140k+6592,7)$, and an $MH(140k+8776,7)$. For all large enough sizes $n$ with $n \equiv 5 \pmod{7}$ and $n \equiv 0 \pmod{4}$, or simply $n \equiv 12 \pmod{28}$, these constructions prove that an $MH(n,7)$ exists. 

We must therefore only account for the case when the size $n$ is equivalent to $26 \pmod{28}$. We can make this construction, for sufficiently large $n$, by simply taking the direct sum of the core of an $MH(n_0,7)$, with $n_0 \equiv 12 \pmod{28}$, as constructed in the previous paragraph, and an appropriate design of size congruent to $3 \pmod{4}$. We can use a design from family 10 of \cite{ion06}. These designs are of the form \[\left(1 + \frac{qr(r^m-1)}{(r-1)},r^m,\frac{r^{m-1}(r-1)}{q}\right),\] where $q$ and $r$ are prime powers. Choosing $q = 29$, $d = 5$, $r = 732541$, and $m = 6$, we have a design from this family that satisfies the necessary parameters for the direct sum construction. Thus all sufficiently large $n \equiv 12 \pmod{14}$ are sizes of some $7$-modular Hadamard matrix. The size of this design is a bit $4.5 \times 10^{36}$; matrices all sizes congruent to $12 \pmod{14}$ and $\geq 4.5 \times 10^{36}$\footnote{The exact bound is $4481157543653329008412788039740507382.$} are constructed by this proof.
\end{proof}

These results determine for which $n$ an $MH(n,7)$ exists with finitely many exceptions, and yield the following theorem.

\begin{theorem}
\label{donefor7}
The Hadamard conjecture modulo $7$ is true for $n \geq 4.5 \times 10^{36}$.
\end{theorem}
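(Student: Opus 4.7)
The plan is to assemble a complete characterization of which $n \geq 4.5 \times 10^{36}$ admit an $MH(n,7)$ by combining the residue obstructions with the constructive propositions proved earlier in this section. First I would invoke Lemma \ref{oddqrs}: when $n$ is odd and coprime to $7$, an $MH(n,7)$ can exist only if $n$ is a quadratic residue modulo $7$, i.e.\ $n \bmod 7 \in \{1,2,4\}$. Combined with parity this eliminates precisely the odd residues $n \equiv 3, 5, 13 \pmod{14}$, matching the nonexistence half of Proposition \ref{mod7s047811}. The eleven remaining residues $\{0, 1, 2, 4, 6, 7, 8, 9, 10, 11, 12\} \pmod{14}$ are then the classes that must be realized constructively.

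The next step is to walk through each of these eleven classes and pair it with an existence result from this section. The classes $n \equiv 0, 4, 7, 8, 11 \pmod{14}$ are handled in Proposition \ref{mod7s047811}. The classes $n \equiv 1, 8 \pmod{14}$ are absorbed into the proposition covering $n \equiv 1 \pmod{7}$, and the classes $n \equiv 2, 9 \pmod{14}$ are absorbed into the proposition covering $n \equiv 2 \pmod{7}$. Each of the remaining even classes $n \equiv 6, 10, 12 \pmod{14}$ is handled by its own dedicated proposition above.

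Finally I would take the maximum of the size thresholds at which these constructions begin to apply: $29$ for $n \equiv 1 \pmod{7}$, $52{,}565$ for $n \equiv 2 \pmod{7}$, $398$ for $n \equiv 6 \pmod{14}$, $683{,}294$ for $n \equiv 10 \pmod{14}$, and $4.5 \times 10^{36}$ for $n \equiv 12 \pmod{14}$. The last of these dominates, so once $n \geq 4.5 \times 10^{36}$ every admissible residue class modulo $14$ is realized by some $MH(n,7)$, while the inadmissible residues remain excluded by Lemma \ref{oddqrs}.

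The main obstacle here is entirely bookkeeping rather than mathematical: I need to confirm that the eleven admissible residues are covered without gap, that no residue is misassigned between the odd-mod-$7$ and even-mod-$7$ propositions, and that $4.5 \times 10^{36}$ really is the dominant threshold. Since all of the substantive constructions are already in hand, the theorem reduces to this verification.
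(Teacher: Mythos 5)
Your proposal is correct and takes essentially the same approach as the paper, whose (implicit) proof of this theorem is just the aggregation of the preceding propositions over the admissible residue classes modulo $14$, with $4.5\times 10^{36}$ as the dominant threshold. You in fact prove slightly more than the statement requires---the Hadamard conjecture modulo $7$ concerns only sizes divisible by $4$, so the odd residue classes and the nonexistence half via Lemma \ref{oddqrs} are not strictly needed---but your bookkeeping of the eleven classes and their thresholds is accurate and complete.
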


\section{p-modular Hadamard matrices}

The results in \cite{lee13} and Theorem \ref{donefor7} motivate the following conjecture.

\begin{conjecture}
Let $p$ be an odd prime. For all but finitely many $n$, an $MH(n,p)$ exists if and only if $n$ is even or a quadratic residue of $p$.
\label{allth}
\end{conjecture}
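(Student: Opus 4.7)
The plan is to mimic, for a general odd prime $p$, the strategy used above for $p=7$. The statement splits into necessity and sufficiency; necessity is already in hand, and the real work lies in explicit constructions.

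Necessity is immediate from Lemma \ref{oddqrs}: if $n$ is odd, coprime to $p$, and a quadratic nonresidue of $p$, no $MH(n,p)$ can exist. When instead $n$ is odd with $n \equiv 0 \pmod p$, the lemma does not apply and the all-ones matrix $J$ already realizes the design, so the ``even or quadratic residue'' formulation is consistent with existence in this edge case. These obstructions are uniform, not just asymptotic, so we get them essentially for free.

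For sufficiency, I would partition the admissible residues $n \bmod 2p$ -- those with $n$ even, or with $n$ odd and $n \bmod p$ a nonzero square -- and produce, for each class, an infinite family of $MH(n,p)$ covering all sufficiently large $n$. The base matrices are $J$ for $n \equiv 0 \pmod p$ and $J-2I$ for $n \equiv 4 \pmod p$ (Lemma \ref{04modm}); Kronecker products with $F_2$, and with known real Hadamard matrices $M_{12}$, $M_{20}$, and so on (Lemma \ref{double}), then translate these base cases into additional residue classes modulo $4p$, $12p$, $20p$. For each remaining admissible class $n \equiv n_0 \pmod{2p}$ not directly reachable, I would invoke Lemma \ref{iteratelemma}: it suffices to locate a modular symmetric design with parameters $(v,k,\lambda;p)$ satisfying $v \equiv k \equiv 1 \pmod p$ and $\lambda \equiv 2^{\phi(p)-2}(4-n_0) \pmod p$, together with a starting matrix in a class that, after shifts by multiples of $v-1$, lands in $n_0 \pmod{2p}$. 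Iteration then covers all large $n$ in that class.

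The main obstacle -- and presumably the reason the conjecture is only proved in restricted cases in this paper -- is guaranteeing that enough such designs exist for arbitrary $p$. For every residue $\lambda_0$ modulo $p$ that must be hit, one needs an explicit modular symmetric design with $v \equiv k \equiv 1 \pmod p$ and $\lambda \equiv \lambda_0 \pmod p$. For $p=7$ the paper finds these ad hoc in \cite{ion06}, but for general $p$ a systematic infinite family whose $\lambda$-parameter sweeps out all nonzero residues modulo $p$ is required. A natural candidate is the projective-geometry family of the form $(1 + qr(r^m-1)/(r-1),\, r^m,\, r^{m-1}(r-1)/q)$ used above in the $n \equiv 12 \pmod{14}$ case; the crucial Diophantine step would be choosing the prime-power parameters $q$, $r$, $m$ so that $v$ and $k$ are forced to $1 \pmod p$ while $\lambda$ realizes any prescribed residue. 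Controlling all three congruences simultaneously is likely to require an arithmetic hypothesis such as $2$ being a primitive root modulo $p$, which is precisely the restriction the paper subsequently imposes to make the strategy go through.
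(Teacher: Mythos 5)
This statement is a \emph{conjecture}: the paper offers no proof of it, only conditional partial results (the cases $p=3,5,7$, and, under Condition \ref{pps} --- a special case of Schinzel's hypothesis H --- the primes $p\equiv 3\pmod 4$ with $2$ a primitive root, plus a weaker statement for $p\equiv 1\pmod 4$). Your proposal is therefore not being measured against a proof in the paper, and as written it is not a proof either: it is a strategy outline, essentially the same one the paper pursues in its Sections 4--5 (base matrices $J$ and $J-2I$, Kronecker products with $F_2$, $M_{12}$, $M_{20}$, then Lemma \ref{iteratelemma} with designs from \cite{ion06} to sweep residue classes). The necessity half via Lemma \ref{oddqrs} is fine, but the sufficiency half rests entirely on the step you yourself flag as ``the main obstacle.''

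That obstacle is a genuine gap, and it is exactly where the paper has to stop short of the conjecture. To run the iteration for every admissible class $n_0 \pmod{2p}$ you must produce, for every required residue $\lambda_0 \pmod p$, a symmetric design with $v\equiv k\equiv 1\pmod p$, $\lambda\equiv\lambda_0\pmod p$, and with $v-1$ having the right $2$-adic behavior so the shifts reach both parities (this last congruence condition mod $4$ is what forces the paper's case split between $p\equiv 3$ and $p\equiv 1\pmod 4$). No unconditional supply of such designs is known: the paper can only guarantee them by assuming Condition \ref{pps}, i.e.\ Schinzel's hypothesis H, applied to the families $10$ and $11$ of \cite{ion06}. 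Moreover, the auxiliary hypothesis you anticipate --- $2$ a primitive root of $p$ --- is not a harmless technicality that ``makes the strategy go through'' for general $p$: it fails for infinitely many primes (e.g.\ whenever $2$ is a quadratic residue, so for all $p\equiv\pm 1\pmod 8$), and even when it holds and $p\equiv 1\pmod 4$ the parity constraints only yield the paper's weaker theorem, not the full conjecture. So the proposal correctly reconstructs the known attack and correctly locates its missing ingredient, but it does not close the gap, and the statement remains a conjecture.
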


This has already been shown for $p = 3,5,7$, and Lemma \ref{oddqrs} proves the only-if direction of the conjecture for all $p$. Assuming certain number theoretical conjectures, we prove this conjecture for primes $p \equiv 3 \pmod{4}$ and such that $2$ is a primitive root of $p$. 

\begin{theorem}
Let $p \equiv 3 \pmod{4}$ be a prime such that $2$ is a primitive root of $p$, and let $r$ be any quadratic residue of $p$. For all but finitely many even values of $n$, with $n \equiv 2(1+r) \pmod{p}$, an $MH(n,p)$ exists.
\label{gentech}
\end{theorem}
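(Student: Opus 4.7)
The plan is to mirror the strategy employed for $p = 7$ in Section 4: construct a single seed $MH(n_0, p)$ of the correct residue, then apply Lemma \ref{iteratelemma} with suitable modular symmetric designs to fill out all sufficiently large even $n \equiv 2(1+r) \pmod{p}$.

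First, I would build the seed. The matrix $J - 2I$ of size $p+4$ is an $MH(p+4, p)$ by Lemma \ref{04modm}, and tensoring $j$ times with the real Hadamard matrix $F_2$ produces an $MH(2^j(p+4), p)$ whose size is congruent to $2^{j+2} \pmod{p}$. Because $p \equiv 3 \pmod 4$, the element $-1$ is a nonresidue modulo $p$, so the given quadratic residue $r$ satisfies $r \not\equiv -1$ and hence $2(1+r) \not\equiv 0 \pmod{p}$. Since $2$ is a primitive root of $p$, the powers $2^{j+2}$ enumerate every nonzero residue modulo $p$, so some $j$ produces an even seed $n_0 \equiv 2(1+r) \pmod{p}$.

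Second, I would apply Lemma \ref{iteratelemma} with designs $(v, k, \lambda; p)$ satisfying $v \equiv k \equiv 1 \pmod{p}$ and $\lambda \equiv 2^{\phi(p)-2}(4-n_0) \equiv 4^{-1}(4 - n_0) \equiv (1-r)/2 \pmod{p}$, using $2^{p-3} \equiv 4^{-1} \pmod p$. The most flexible source is family 10 in \cite{ion06}, whose parameters are
\[
\left(1 + \frac{qs(s^m-1)}{s-1},\; s^m,\; \frac{s^{m-1}(s-1)}{q}\right)
\]
for prime powers $q, s$ and positive integers $m$. Requiring $m$ to be a multiple of the multiplicative order of $s$ modulo $p$ gives $k = s^m \equiv 1 \pmod{p}$, and the residues of $v$ and $\lambda$ can then be tuned through additional congruence constraints on $q, s$, and $m$.

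Third, each application of Lemma \ref{iteratelemma} advances the size by a fixed step $v - 1$, a multiple of $p$. Choosing $v$ odd further makes this step a multiple of $2p$, so a single design only fills one arithmetic progression inside the class $n_0 \pmod{2p}$. To cover every sufficiently large even $n \equiv 2(1+r) \pmod{p}$, I would exhibit two such designs with steps $v_1 - 1, v_2 - 1$ whose quotients by $2p$ are coprime; the Sylvester--Frobenius coin theorem then guarantees that every sufficiently large offset $n - n_0$ is a nonnegative integer combination of these steps.

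The hard part will be this third step: simultaneously producing two family-10 designs with the prescribed residues modulo $p$ and with coprime quotient-step sizes. Existence of such designs reduces to finding prime powers $q, s$ meeting several simultaneous congruence conditions modulo $p$ and yielding coprime increments; proving this unconditionally appears out of reach, and this is precisely the place where the unproven number-theoretic hypotheses alluded to in the abstract must be invoked.
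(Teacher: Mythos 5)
Your overall architecture (a seed of the right residue, iteration of the direct-sum step via Lemma \ref{iteratelemma}, then a covering argument) is reasonable, and your reduction of the $\lambda$-constraint to $\lambda\equiv (1-r)/2\pmod p$ is correct. But the proposal has a genuine gap exactly where you flag it, and your diagnosis of that gap is wrong: Theorem \ref{gentech} carries no Condition \ref{pps} hypothesis, and the paper proves it unconditionally. The obstruction you hit comes from insisting on family 10 of \cite{ion06}, whose parameters require the auxiliary quantity $\frac{q^d-1}{q-1}$ to itself be a prime power; that is the genuinely hard Diophantine condition, and it is precisely why family 10 is reserved for Theorem \ref{3mod4}, where Condition \ref{pps} (a case of Schinzel's hypothesis) is assumed. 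For the present theorem the paper instead uses family 11, with parameters $\left(1+\frac{2q(q^m-1)}{q-1},\, q^m,\, \frac{q^{m-1}(q-1)}{2}\right)$, available for every odd prime power $q$ and every $m$. The constraint analysis then needs only classical inputs: by Dirichlet's theorem choose a prime $q\equiv r^{-1}\pmod p$ (a quadratic residue since $r$ is); because $p\equiv 3\pmod 4$ the residues form the subgroup of odd order $(p-1)/2$, so the order $m$ of $q$ modulo $p$ is odd (take $m=p$ if $q\equiv 1\pmod p$), which forces $v\equiv 3\pmod 4$, $k=q^m\equiv 1\pmod p$, and $\lambda\equiv 2^{\phi(p)-2}(4-n_0)\pmod p$. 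No Schinzel-type hypothesis is needed, so a proof that invokes one does not establish the stated theorem.

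Your covering mechanism also needs repair. From a single seed $2^j(p+4)$ you must express every sufficiently large multiple of $2p$ as a nonnegative combination of two step sizes $v_1-1,\ v_2-1$ whose quotients by $2p$ are coprime; you never produce such a pair, and arranging coprimality on top of the congruence constraints is an additional arithmetic problem you leave unsolved. The paper sidesteps it with a single design by keeping the whole seed family: $M_j\otimes F_{2^i}$ for all $j\ge 1$, where $M_j$ is an $MH(jp+4,p)$ from Lemma \ref{04modm} and $2^{i+2}\equiv n_0\pmod p$, already realizes every size $\equiv 2^{i+2}\pmod{2^ip}$ beyond the first. Then, since the family-11 step $v-1$ is oddly even (that is, $\equiv 2\pmod 4$, because $v\equiv 3\pmod 4$) and divisible by $p$, its multiples run through all multiples of $2p$ modulo $2^ip$; iterating the direct sum $l$ times therefore shifts the full covered class $2^{i+2}\pmod{2^ip}$ onto every even residue class mod $2^ip$ that is $\equiv n_0\pmod p$. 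One design suffices, with no coprimality requirement and no appeal to the Sylvester--Frobenius theorem.
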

\begin{proof}
Fix $p$ and $r$, and define $n_0 = 2(1+r)$. For every integer $j \geq 1$, an $MH(jp + 4,p)$ exists by Lemma \ref{04modm}; let $M_j$ be one such $MH(jp+4,p)$. Moreover, since $2$ is a primitive root of $p$, we know that there exists some smallest positive integer $i$ with $2^{i+2} \equiv n_0 \pmod{p}$. A real Hadamard matrix of size $2^i$ can be constructed via the doubling technique beginning with $F_2$; call the $2^i \times 2^i$ Hadamard matrix $F_{2^i}$. Then $M_j \otimes F_{2^i}$ is an $MH(2^ijp + 2^{i+2},p)$, so for all $n \equiv n_0 \pmod{p}$ where $n \equiv 2^{i+2} \pmod{2^ip}$ and $n \geq 2^ip + 2^{i+2}$, an $MH(n,p)$ exists. 

For every congruence class mod $2^ip$ with elements that are even and congruent to $n_0 \pmod{p}$, we would like to do the same; namely, we would like to show for each of these congruence classes that for all but finitely many elements $n$, an $MH(n,p)$ exists. If $i= 0$ we are done, given the construction above. If $i > 0$, each congruence class mod $2^ip$ is either even or odd. We can accomplish our goal in one fell swoop by finding a symmetric design $D$ of size $v$ congruent to $3 \pmod{4}$ and $1 \pmod{p}$, where $2(D(M_j) \oplus D) - J$ is a $p$-modular Hadamard matrix of size congruent to $n_0 \pmod{p}$. This constructs matrices of size $2^ipj + 2^{i+2} + v - 1$ for any $j \geq 0$, so by varying $j$ we have constructed $p$-modular Hadamard matrices for all but finitely many of the sizes in the congruence class $2^{i+2} + v - 1 \pmod{2^ip}$. Let $D$ be a design such that the sum $2(D(M_j) \oplus D) - J$ is a $p$-modular Hadamard matrix of size congruent to $n_0 \pmod{p}$, and call it $M'_j$. Then, because the constraints for the parameters of $D$ depend only on $n_0$ and $p$, $2(D(M'_j) \oplus D) - J$ is also a $p$-modular Hadamard matrix of size congruent to $n_0 \pmod{p}$. Defining $M^{(l)}_j$ as $2(D(M^{(l-1)}_j) \oplus D) - J$ for all $l \geq 0$, then for each fixed $l$, by varying $j$ we construct all but finitely many $p$-modular Hadamard matrices of size congruent to $2^{i+2} + l(v-1) \pmod{2^ip}$. However, $v-1$ is oddly even and divisible by $p$, so upon varying $l$ the values $2^{i+2} + l(v-1)$ achieve each even congruence class mod $2^ip$ that is congruent to $n_0 \pmod{p}$. Thus finding an appropriate choice of a design $D$ is sufficient to prove that for each congruence class of $2^ip$ with elements even and congruent to $n_0 \pmod{p}$, for all but finitely many elements $n$ of this congruence class, an $MH(n,p)$ exists.

Thus far our proof only depends on the primality of $p$ and the fact that $2$ is a primitive root; we will use this method in later arguments as well.

Consider designs $D$ in family 11 of \cite{ion06}, of the form
\[\left(1 + \frac{2q(q^m - 1)}{(q-1)},q^m, \frac{q^{m-1}(q-1)}{2}\right), \]
with $q$ an odd prime power and $m$ an integer. Then, with reference to the constraints on $v$ and those in Lemma \ref{dsum}, we know that the following holds:
\begin{align*}
1 + \frac{2q(q^m-1)}{(q-1)} &\equiv 1 \pmod{p}\\
1 + \frac{2q(q^m-1)}{(q-1)} &\equiv 3 \pmod{4}\\
q^m &\equiv 1 \pmod{p}\\
\frac{q^{m-1}(q-1)}{2} &\equiv 2^{\phi(p) - 2}(4-n_0)\pmod{p}.\\
\end{align*}
$\frac{2q(q^m-1)}{(q-1)}$ must be oddly even, so $\frac{q(q^m-1)}{(q-1)}$ must be odd. Thus, $q$ is odd, as is $\frac{q^m-1}{q-1} = q^{m-1} + \cdots + q + 1$, so $m$ is odd. Since $q^m \equiv 1 \pmod{p}$, the order of $q \pmod{p}$ must divide $m$; thus the order of $q$ is odd. $p \equiv 3 \pmod{4}$, so we can guarantee this condition exactly when $q$ is a quadratic residue of $p$. If $q \not\equiv 1 \pmod{p}$, we set $m$ equal to the order of $q$, and we automatically have satisfied the first three conditions. If $q \equiv 1 \pmod{p}$, we instead let $m$ be $p$ to satisfy the first three conditions.
The remaining constraint gives: 
\begin{align*}
\frac{q^{m-1}(q-1)}{2} &\equiv 2^{\phi(p) - 2}(4-n_0)\pmod{p}\\
\Leftrightarrow 2(1 - q^{m-1}) &\equiv 4-n_0 \pmod{p}\\
\Leftrightarrow n_0 &\equiv 2(q^{-1} + 1) \pmod{p}.\\
\end{align*}

$n_0 = 2(1+r)$, with $r$ a residue, so we can find some prime $q$ with $q \equiv r^{-1} \pmod{p}$ and define $m$ as above. Our choice of $D$ thus guarantees that all but finitely many $p$-modular Hadamard matrices of size congruent to $n_0 \pmod{p}$ and even exist.
\end{proof}

For later results, we define the following condition on a prime $p$.

\begin{condition}
For every $1 \leq \delta < p$, there exists an odd prime power $q \equiv 1 \pmod{p}$ and a $d \equiv \delta \pmod{p}$ such that $r = \frac{q^d - 1}{q-1}$ is a prime power with $r \equiv 1 \pmod{4}$.
\label{pps}
\end{condition}

Fixing a $1 \leq \delta < p$ and a prime $d \equiv \delta \pmod{p}$, and letting our choice of $q$ vary, this condition becomes a special case of Schinzel's hypothesis H (see \cite{sch58}). We can let $q(x) = px + 1$ and $r(x) = \frac{q(x)^d - 1}{q(x) -1}$ be polynomials; these polynomials are irreducible and there is no prime that divides their product at every value. Schinzel's hypothesis H states that there is some $x$ for which $q(x)$ and $r(x)$ are both prime; taking these as our $q$ and $r$, respectively, gives us the above condition. 
Assuming this condition, we have the following result.

\begin{theorem}
Let $p$ be a prime with $2$ as a primitive root and such that $p \equiv 3 \pmod{4}$ and satisfies Condition \ref{pps}. For all but finitely many $n$, an $MH(n,p)$ exists if $n$ is not an odd quadratic nonresidue of $p$.
\label{3mod4}
\end{theorem}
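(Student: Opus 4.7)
The plan is to run the iterative construction of Theorem \ref{gentech} for each nonzero residue class $n_0 \pmod p$, replacing the family 11 designs used there with family 10 designs from \cite{ion06}, whose existence in the required form is guaranteed by Condition \ref{pps}, and tuning the exponent parameter of the design to produce even or odd Hadamard matrices as needed. The case $n \equiv 0 \pmod p$ is handled by $J$ via Lemma \ref{04modm}, and the case $n_0 \equiv 4 \pmod p$ (which is itself a quadratic residue) is handled directly: $MH(jp+4, p) = J - 2I$ produces matrices of both parities simply by varying the parity of $j$. So the work is to handle $n_0 \not\equiv 0, 4 \pmod p$.

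For such an $n_0$, I set $\delta \equiv 4 n_0^{-1} \pmod p$ (so $1 \leq \delta < p$ and $\delta \not\equiv 1 \pmod p$) and apply Condition \ref{pps} to obtain an odd prime power $q \equiv 1 \pmod p$, an integer $d \equiv \delta \pmod p$, and a prime power $r = (q^d - 1)/(q-1) \equiv 1 \pmod 4$. Family 10 then supplies, for each positive integer $m$, a design with parameters $\bigl(1 + qr(r^m-1)/(r-1),\; r^m,\; r^{m-1}(r-1)/q\bigr)$. Since $q \equiv 1 \pmod p$ forces $r \equiv d \equiv \delta \pmod p$, a direct check will show that whenever $m$ is a multiple of $t := \mathrm{ord}_p(\delta)$, the parameters satisfy $v \equiv 1 \pmod p$, $k \equiv 1 \pmod p$, and $\lambda \equiv 1 - \delta^{-1} \equiv 2^{\phi(p)-2}(4-n_0) \pmod p$, matching the hypotheses of Lemma \ref{iteratelemma}.

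The essential further choice is of $m$ modulo $4$, because $r \equiv 1 \pmod 4$ and $q$ odd give $v - 1 \equiv qm \pmod 4$. To construct even Hadamard matrices, I want $v - 1 \equiv 2 \pmod 4$, so that $v - 1$ is oddly even and divisible by $p$ and the orbit argument of Theorem \ref{gentech} transfers directly; this requires $m \equiv 2 \pmod 4$ with $t \mid m$, and since $p \equiv 3 \pmod 4$ forces $t \mid p - 1 = 2 \cdot (\mathrm{odd})$, I can take $m = 2t$ when $t$ is odd or $m = t$ when $t \equiv 2 \pmod 4$. To construct odd Hadamard matrices, needed only when $n_0$ is a quadratic residue, $\delta \equiv 4 n_0^{-1}$ is then a QR (since $4$ is), so $t$ is odd; taking $m = t$ gives $v - 1$ odd. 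Iterating this design $l$ times on the even starting matrix $MH(2^i jp + 2^{i+2}, p)$ built as in Theorem \ref{gentech}, the result has odd size precisely when $l$ is odd, and the residues $\{2^{i+2} + l(v-1) \pmod{2^ip} : l \text{ odd}\}$ form the coset $2^{i+2} + (v-1) + \langle 2(v-1)\rangle$, which has order $2^{i-1}$ and hence exhausts the set of odd residues in $\mathbb{Z}/2^ip$ congruent to $n_0 \pmod p$.

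The main obstacle I anticipate is the parity bookkeeping in this orbit analysis: the $2$-adic valuation of $v - 1$ must be exactly $1$ in the even construction and exactly $0$ in the odd one, so that the subgroup generated by $v - 1$ (respectively by $2(v-1)$) in $\mathbb{Z}/2^ip$ has the correct index and its coset covers the entire target set. The hypothesis $p \equiv 3 \pmod 4$ is used crucially here: it prevents $t = \mathrm{ord}_p(\delta)$ from having $2$-adic valuation at least $2$, so that the required $m$ can always be chosen. Condition \ref{pps} supplies the complementary number-theoretic input, namely a prime-power $r$ with the right residue mod $p$ and mod $4$ for every possible $n_0$.
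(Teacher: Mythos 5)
Your proposal is correct and follows essentially the same route as the paper: run the machinery of Theorem \ref{gentech} with family 10 designs supplied by Condition \ref{pps}, choosing $\delta \equiv 4n^{-1} \pmod{p}$ and tuning the exponent $m$ (via the order of $\delta$, using $p \equiv 3 \pmod 4$) so that $v-1$ is oddly even for the even sizes and odd for the quadratic-residue sizes. The only cosmetic difference is in the odd case, where the paper appends a single even-$v$ design to each already-constructed even-sized matrix while you iterate an odd-step design an odd number of times and run the coset argument modulo $2^ip$; both yield the same conclusion.
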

\begin{proof}
We will prove this in two stages; first, by proving that for all but finitely many even $n$, an $MH(n,p)$ exists, and second, by proving that for all but finitely many quadratic residues $n$, an $MH(n,p)$ exists.
Since $2$ is a primitive root of $p$, we can use the procedure from the proof of Theorem \ref{gentech} to construct all desired matrices of even size, provided that we have a design with appropriate parameters. We use designs from family 10 in \cite{ion06} of the form
\[\left(1 + \frac{qr(r^m-1)}{(r-1)},r^m,\frac{r^{m-1}(r-1)}{q}\right)\]
where $q$ and $r = \frac{q^d-1}{q-1}$ are both prime powers, and $m$ and $d$ are positive integers. We need only that the parameters of this design satisfy the constraints
\begin{align}
1+\frac{qr(r^m-1)}{(r-1)} \equiv 3 \pmod{4}\label{eq:1}\\
1+\frac{qr(r^m-1)}{(r-1)} \equiv 1 \pmod{p}\label{eq:2}\\
r^m \equiv 1 \pmod{p}\label{eq:3}\\
\frac{r^{m-1}(r-1)}{q} \equiv 2^{\phi(p)-2}(4-n) \pmod{p}.\label{eq:4}
\end{align}
Assume that we have a $q \equiv 1 \pmod{p}$. Then, constraint (\ref{eq:4}) becomes 
\[r^{m-1}(r-1) \equiv 2^{\phi(p) - 2}(4-n) \pmod{p}\]
which, given constraint (\ref{eq:2}), further reduces to
\[n \equiv 4r^{-1} \pmod{p}.\]
Note that since $r = q^{d-1} + \cdots + 1$, $r \equiv d \pmod{p}$. We choose $\delta \pmod{p}$ so that $n \equiv 4\delta^{-1} \pmod{p}$. Then, since $p$ satisfies Condition \ref{pps}, we can find a $d$ so that (\ref{eq:4}) is satisfied. If $r \equiv 1 \pmod{p}$, we can set $m = 2p$, meaning that $\frac{qr(r^m-1)}{(r-1)}$ will be congruent both to $2 \pmod 4$ and to $0 \pmod{p}$, so all four constraints are satisfied. Otherwise, we set $m = \phi(p) = p-1$. Since $r \equiv 1 \pmod{4}$ and $p \equiv 3 \pmod{4}$, we know that $r^{m-1} + \cdots + r + 1$ is congruent to $0 \pmod{p}$ and to $2 \pmod{4}$. Thus, constraints (\ref{eq:1}) and (\ref{eq:2}) hold; since $m = \phi(p)$, constraint (\ref{eq:3}) holds as well. We therefore have a construction for all but finitely many even-sized $p$-modular Hadamard matrices.

Now assume that $n$ is a quadratic residue. We can construct an $MH(n,p)$ by taking the matrix $2(D(MH(2n',p)) \oplus D_2) - J$, where $2n' \equiv n \pmod{p}$ as constructed above, and $D_2$ is a design of even size with appropriate parameters. For large enough $n$, this can be constructed if we find a design of odd size; using the same family of symmetric designs, we must find a design satisfying constraints (\ref{eq:2}),(\ref{eq:3}), and (\ref{eq:4}) above, in addition to the constraint that
\[1 + \frac{qr(r^m - 1)}{(r-1)} \equiv 0 \pmod{2}.\]
We can find $q$ and $r$ as above. If $r \equiv 1 \pmod{p}$, we set $m = p$ to satisfy all four constraints. Otherwise, we set $m = \frac{p-1}{2}$. Since $n$ is a quadratic residue, $r$ is one as well, so we still know that $r^m \equiv 1 \pmod{p}$. However, since $m$ is odd,$\frac{qr(r^m - 1)}{r-1}$ is also odd, and our design satisfies all four constraints.
\end{proof}

In particular, if $p$ is a prime satisfying Condition \ref{pps} with $p \equiv 3 \pmod{4}$ and such that $2$ is a primitive root of $p$, then Theorem \ref{3mod4} proves that Conjecture \ref{allth} holds for $p$. We use this to address the case when $p = 11$.

\begin{corollary}
Conjecture \ref{allth} is true for $p = 11$.
\end{corollary}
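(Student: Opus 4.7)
The plan is to verify that $p = 11$ satisfies the three hypotheses of Theorem \ref{3mod4}, after which the corollary is immediate. First, $11 \equiv 3 \pmod{4}$ is immediate. Second, the successive powers $2^1, 2^2, \ldots, 2^{10}$ reduce modulo $11$ to $2, 4, 8, 5, 10, 9, 7, 3, 6, 1$, so $2$ has multiplicative order $10 = \phi(11)$ and is a primitive root of $11$.

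The substantive step is checking Condition \ref{pps} for $p = 11$: for each $\delta \in \{1, 2, \ldots, 10\}$, we must exhibit an odd prime power $q \equiv 1 \pmod{11}$ together with an integer $d \equiv \delta \pmod{11}$ such that $r = (q^d - 1)/(q - 1)$ is a prime power with $r \equiv 1 \pmod{4}$. My plan is to restrict the search to genuine primes $q$ of the form $11k + 1$ (the first few being $23, 67, 89, 199, 331, 353, 397, 419, 463, \ldots$), and for each $\delta$ to try small primes $d$ in the appropriate residue class, then compute $r$ and test it for primality and its residue modulo $4$. Because Condition \ref{pps} involves only ten residue classes, the task reduces to finitely many numerical verifications, which can be carried out directly without appealing to Schinzel's hypothesis.

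The main obstacle is the $\delta = 1$ case: taking $d = 1$ gives the degenerate value $r = 1$, which is not a prime power, so one is forced to try $d \in \{12, 23, 34, \ldots\}$ and establish primality of a potentially large generalized repunit $1 + q + q^2 + \cdots + q^{d-1}$. The other residue classes admit much smaller candidate $d$'s and should be handled quickly. Once a complete table of valid pairs $(q, d)$ has been exhibited, Condition \ref{pps} holds for $p = 11$, and Theorem \ref{3mod4} yields that Conjecture \ref{allth} is valid for $p = 11$.
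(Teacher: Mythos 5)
Your overall strategy is exactly the paper's: check that $11\equiv 3\pmod 4$, that $2$ is a primitive root of $11$, and that $11$ satisfies Condition \ref{pps}, then invoke Theorem \ref{3mod4} (the only-if direction being Lemma \ref{oddqrs}). The problem is that you never actually carry out the step that constitutes the proof. The entire content of the corollary is the finite verification of Condition \ref{pps}, i.e.\ an explicit table of pairs $(q,d)$, one for each residue $\delta\in\{1,\dots,10\}$, together with the certification that each $r=(q^d-1)/(q-1)$ is a prime power congruent to $1\pmod 4$; the paper supplies precisely such a table. Writing ``once a complete table of valid pairs has been exhibited, Condition \ref{pps} holds'' defers the whole argument, and it is not a routine deferral: you yourself identify the $\delta\equiv 1$ case as the obstacle, and indeed the paper's entry for that class is $q=463$ with exponent $397$, so the repunit $r=1+q+\cdots+q^{396}$ has over a thousand digits and requires a genuine primality certificate. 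Since Condition \ref{pps} in general sits at the level of Schinzel's hypothesis H, there is no a priori guarantee that a feasible candidate exists in each class; exhibiting one is the proof, and without the data the corollary is not established.

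Two smaller points. First, your candidate list $d\in\{12,23,34,\dots\}$ for $\delta=1$ includes even exponents, which can never work: with $q$ odd, $r=1+q+\cdots+q^{d-1}\equiv d\pmod 2$, and $r\equiv 1\pmod 4$ forces $r$ odd, hence $d$ odd, so only $d=23,45,67,\dots$ need be tried. Second, restricting $q$ to genuine primes $\equiv 1\pmod{11}$ is harmless (the condition permits prime powers), and the preliminary verifications ($11\equiv 3\pmod4$, order of $2$ mod $11$ equals $10$) are fine as you state them.
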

\begin{proof}
The following table of values displays the $q$ and $\delta$ values that show that $p=11$ satisfies Condition \ref{pps}, which proves Conjecture \ref{allth} when $p = 11$, and thus also the $11$-modular Hadamard conjecture for all but finitely many cases.

\begin{center}
\begin{tabular}{|c||c|c|}
\hline
$d$ & $q$ & $\delta$ \\
\hline
1 & 463 & 397 \\
2 & 397 & 13 \\
3 & 2663 & 3 \\
4 & 67 & 367 \\
5 & 23 & 5 \\
6 & 419 & 17 \\
7 & 947 & 7 \\
8 & 67 & 19 \\
9 & 617 & 317 \\
10 & 89 & 109 \\
\hline

\end{tabular}
\end{center}
\end{proof}

Meanwhile, if $p \equiv 1 \pmod{4}$, we can prove a weaker result.
\begin{theorem}
Let $p$ be a prime satisfying Condition \ref{pps} such that $2$ is a primitive root of $p$ and $p \equiv 2^i + 1 \pmod{2^{i+1}}$ for some integer $i \geq 1$. Let $0 \leq j \leq i$. Then, for all but finitely many $n$, if $4n^{-1} \equiv a^{2^j} \pmod{p}$ for some $a$ and if $n \equiv 0 \pmod{2^{i-j}}$, an $MH(n,p)$ exists.
\end{theorem}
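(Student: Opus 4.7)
The plan is to adapt the framework of Theorem \ref{3mod4} to the setting $v_2(p-1)=i$. I would fix an integer $0 \le j \le i$ and a residue $c$ with $1\le c\le p-1$ for which $4c^{-1}$ is a $2^j$-th power modulo $p$, and construct $MH(n,p)$ for all but finitely many $n$ satisfying $n\equiv c\pmod{p}$ and $2^{i-j}\mid n$. Since $2$ is a primitive root of $p$, writing $c\equiv 2^a\pmod p$, the hypothesis forces $a\equiv 2\pmod{2^j}$. Because $2^j\mid p-1$, I can choose $k\ge i-j$ with $k\equiv a-2\pmod{p-1}$ and $2^j\mid k$. Tensoring any $MH(j'p+4,p)$ from Lemma \ref{04modm} with the doubled real Hadamard matrix $F_{2^k}$ then yields an $MH(2^kj'p+2^{k+2},p)$, giving a base family (indexed by $j'\ge 1$) of sizes congruent to $c\pmod p$ and divisible by $2^k\ge 2^{i-j}$.

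To iterate, I would invoke Condition \ref{pps} with $\delta \equiv 4c^{-1}\pmod p$ to obtain an odd prime power $q\equiv 1\pmod p$ and $d\equiv\delta\pmod p$ with $r=(q^d-1)/(q-1)$ a prime power satisfying $r\equiv 1\pmod 4$; since $q\equiv 1\pmod p$, we have $r\equiv\delta\pmod p$ is a $2^j$-th power. I would take
\[
m=\begin{cases}(p-1)/2^j & \text{if } \delta\not\equiv 1\pmod p,\\ p\cdot 2^{i-j} & \text{if } \delta\equiv 1\pmod p,\end{cases}
\]
and form the design from family $10$ of \cite{ion06} with parameters $(v,r^m,\lambda)$, where $v=1+qr(r^m-1)/(r-1)$ and $\lambda=r^{m-1}(r-1)/q$. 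Direct computation, analogous to Theorem \ref{3mod4}, shows $v\equiv 1\pmod p$, $r^m\equiv 1\pmod p$, and $\lambda\equiv 1-r^{-1}\equiv 2^{\phi(p)-2}(4-n)\pmod p$ whenever $n\equiv c\pmod p$, so this design satisfies the hypotheses of Lemma \ref{iteratelemma}.

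The key new ingredient is the $2$-adic analysis. Since $r\equiv 1\pmod 4$, lifting-the-exponent yields $v_2(r^m-1)=v_2(r-1)+v_2(m)$, so $v_2\!\left(\tfrac{r^m-1}{r-1}\right)=v_2(m)=i-j$ in either case for $m$, and hence $v-1$ is divisible by $2^{i-j}p$. Iteration via Lemma \ref{iteratelemma} therefore preserves both the residue $c\pmod p$ and divisibility by $2^{i-j}$, and a Frobenius-number argument using $\gcd(2^kp,v-1)=2^{i-j}p$ shows that $\{2^kj'p+2^{k+2}+l(v-1): j'\ge 1,\ l\ge 0\}$ covers every sufficiently large $n$ with $n\equiv c\pmod p$ and $2^{i-j}\mid n$. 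Running the construction over the finitely many valid residues $c$ completes the proof.

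The main obstacle is orchestrating the three constraints on $m$ simultaneously: $v\equiv 1\pmod p$ (which forces the case split between $\delta\equiv 1$ and $\delta\not\equiv 1$), $r^m\equiv 1\pmod p$, and $v_2(v-1)=i-j$. The LTE step, which hinges on the $r\equiv 1\pmod 4$ built into Condition \ref{pps}, is what makes all three compatible.
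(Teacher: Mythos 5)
Your proposal is correct and follows essentially the same route as the paper: the same base family obtained by tensoring $MH(j'p+4,p)$ with a power-of-two Hadamard matrix, the same family-10 designs supplied by Condition \ref{pps} with $q \equiv 1 \pmod{p}$ satisfying constraints (\ref{eq:2})--(\ref{eq:4}), and the same iteration/covering argument keyed to $v_2(v-1)=i-j$. The only differences are cosmetic: you choose $m = (p-1)/2^j$ (resp.\ $p\cdot 2^{i-j}$) instead of the order of $r$ (resp.\ $p$), and you cite lifting-the-exponent for $v_2\bigl(\frac{r^m-1}{r-1}\bigr) = v_2(m)$, a fact the paper proves directly via a binomial-coefficient computation.
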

\begin{proof}
We can again use the technique used in Theorem \ref{gentech} to construct matrices within a congruence class (mod $p$), again using designs from family 10 in \cite{ion06} of the form
\[\left(1 + \frac{qr(r^m-1)}{(r-1)},r^m,\frac{r^{m-1}(r-1)}{q}\right)\]
where $q$ and $r = \frac{q^d-1}{q-1}$ are both prime powers. The parameters of this design must satisfy constraints (\ref{eq:2}) through (\ref{eq:4}) of Theorem \ref{3mod4}, in addition to the following:
\[1+\frac{qr(r^m-1)}{(r-1)} \equiv 1 + 2^{i-j} \pmod{2^{i-j+1}}.\]
This equivalence will exactly ensure that all matrices with size congruent to $n$ and divisible by $2^{i-j}$ are eventually constructed, since the difference in size between any two matrices in our additive sequence will be also be congruent to $2^{i-j}$. Note as above that $n \equiv 4r^{-1} \pmod{p}$, so by assumption $r \equiv a^{2^j} \pmod{p}$ for some $a$.

First, we will address the case where $r \equiv 1 \pmod{p}$. Choosing $a$ to be $1$, $r \equiv a^{2^{i}} \pmod{p}$, so we will show that for all but finitely many $n \equiv 4 \pmod{p}$, where $4n^{-1} \equiv 1 \pmod{p}$, an $MH(n,p)$ exists. Thus, we want $\frac{qr(r^m-1)}{(r-1)}$ to be odd. Using $r$ and $q$ that satisfy Condition \ref{pps} with $\delta = 1$, and letting $m = p$, we satisfy constraints (\ref{eq:2}) through (\ref{eq:4}). Moreover, since $r$, $q$, and $m$ are all odd, $\frac{qr(r^m-1)}{(r-1)}$ is odd as well, so we are done.

Now assume that $r \not\equiv 1 \pmod{p}$. Again let $q$ and $r$ be those that satisfy Condition \ref{pps} with $\delta \equiv r \pmod{p}$, and set $m$ equal to the order of $r$ in the multiplicative group modulo $p$. Since $r \equiv a^{2^j} \pmod{p}$ and the order of $a$ divides $p-1$, we know that the highest power of $2$ dividing the order of $r$ must be $2^{i-j}$; equivalently, $m \equiv 2^{i-j} \pmod{2^{i-j+1}}$.
Since $q$ and $r$ are both odd, $\frac{qr(r^m-1)}{(r-1)}$ is divisible by as high a power of $2$ as $\frac{r^m-1}{r-1}$. We will show that $\frac{r^m-1}{r-1} \equiv 2^{i-j} \pmod{2^{i-j+1}}$.

We know according to Condition \ref{pps} that $r \equiv 1 \pmod{4}$, so we can fix $r_0$ such that $r = 4r_0 + 1$. Moreover,
\begin{align*}
\frac{r^m-1}{r-1} &= r^{m-1} + \cdots + 1 \\
&= \sum_{\alpha=0}^{m-1} \sum_{\beta=0}^{\alpha} {\alpha \choose \beta}(4r_0)^{\beta} \\
&= \sum_{\beta=0}^{m-1} \sum_{\alpha=\beta}^{m-1} {\alpha \choose \beta}(4r_0)^{\beta} \\
&= \sum_{\beta=0}^{m-1} (4r_0)^{\beta}\left({\beta \choose \beta} + \cdots + {m-1 \choose \beta}\right) \\
&= \sum_{\beta=0}^{m-1} (4r_0)^{\beta}{m \choose \beta + 1}.
\end{align*}
The $\beta = 0$ term is simply $m$, which is congruent to $2^{i-j} \pmod{2^{i-j+1}}$. Thus, if all other terms are congruent to $0 \pmod{2^{i-j+1}}$, we  will have that $\frac{r^m-1}{r-1} \equiv 2^{i-j} \pmod{2^{i-j+1}}$, as desired.\\
We therefore fix $\beta \geq 1$. We want $(4r_0)^{\beta}{m \choose \beta+1}$ to be divisible by $2^{i-j+1}$, where $2^{i-j}$ divides $m$. However,
\begin{align*}
(4r_0)^{\beta}{m \choose \beta + 1} &= r_0^\beta 2^{2\beta} \frac{m \cdot \cdots \cdot (m - \beta)}{(\beta+1)!},\\
\end{align*}
so it is sufficient to show that, once simplified and assuming $\beta \geq 1$, $\frac{2^{2\beta}}{(\beta+1)!}$ has an even numerator, in which case our expression will be divisible by $2m$ and thus by $2^{i-j + 1}$. The highest power of $2$ that divides $(\beta + 1)!$ is
\[\sum_{i=1}^{\infty}\left\lfloor \frac{\beta + 1}{2^i} \right\rfloor < \sum_{i=1}^{\infty} \frac{\beta + 1}{2^i} = \beta + 1 \leq 2\beta. \]

If $\beta > 1$, this is a strict inequality, so $\frac{2^{2\beta}}{(\beta + 1)!}$, once simplified, has an even numerator; if $\beta = 1$, then $\frac{2^{2\beta}}{(\beta + 1)!} = 2$. The expression $r_0^\beta 2^{2\beta} \frac{m \cdot \cdots \cdot m - \beta -1}{(\beta+1)!}$ must therefore be divisible by $2m$, and thus by $2^{i-j+1}$.
We then know that $\frac{r^m-1}{r-1} \equiv 2^{i-j} \pmod{2^{i-j+1}}$, as desired, so all but finitely many matrices of size divisible by $2^{i-j}$ and congruent to $n \pmod{p}$ will exist.
\end{proof}

\section{Nonexistence for small matrices}

The above results show conditionally that for certain $p$, for all but finitely many $n$, an $MH(n,p)$ exists if $n$ is even or a quadratic residue of $p$. We can show that when $n$ is small with respect to a modulus $m$, where $m$ is not necessarily prime, a stronger necessary condition for the existence of an $MH(n,m)$ holds.

\begin{theorem}
Let $n < 3m$ be odd and satisfy $(n,m) = 1$. Then an $MH(n,m)$ exists only if $m$ is odd and one of \[\frac{14m^2-mn-m-n^2+n \pm \sqrt{\Delta}}{8m}\] is a nonnegative integer, where \[\Delta = 36m^4+m^3(4-28n)+m^2(5n^2-2n+1) + 2mn(n^2-1) +(n-1)^2n^2.\] In particular, $\Delta$ must be a perfect square.
\label{onlybig}
\end{theorem}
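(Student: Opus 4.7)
The plan is to extract Diophantine constraints from the modular orthogonality $HH^\top \equiv nI \pmod m$ by pinning down the (few) admissible values of the row sums, column sums, and off-diagonal inner products of the core $C = C(H)$, then exploiting two different counts of a single quantity to produce the quadratic asserted in the theorem.

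First, Lemma~\ref{gcdmn} gives $(m,4)\mid n$; since $n$ is odd this forces $(m,4)=1$, so $m$ is odd. Normalize $H$ and pass to the core $C$. Orthogonality (mod $m$) of the first row of $H$ with every other row gives that each row sum $r_i$ of $C$ satisfies $r_i \equiv -1 \pmod m$. Additionally $r_i$ is even (rows have even length $n-1$) and $|r_i|\le n-1<3m-1$, and these three constraints together pin $r_i$ down to the two values $\{m-1,\,-(m+1)\}$. Since $(n,m)=1$, $H$ is invertible modulo $m$, so $H^\top H \equiv nI \pmod m$ as well, and the same dichotomy holds for the column sums $s_j$. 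Writing $a$ for the number of ``positive'' rows (those with $r_i = m-1$) and $b = n-1-a$ for the negative rows, matching the row-wise and column-wise totals of $C$ shows that $a$ also equals the number of positive columns.

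The same triple of constraints applied to the off-diagonal entries of $CC^\top$ confines them to $\{m-1,\,-(m+1)\}$. For a pair of rows $i\neq j$, the identity $(CC^\top)_{ij} = 4a_{ij} - 2(p_i+p_j) + (n-1)$, where $a_{ij}$ is the number of columns at which both rows carry $+1$ and $p_i$ is the number of $+1$'s in row $i$, forces a unique admissible choice of $(CC^\top)_{ij}$ per pair type (positive/positive, negative/negative, mixed), with the choice depending on $n+m\pmod 4$. The case $n+m\equiv 0\pmod 4$ makes the implied $a_{ij}$ for any two negative rows negative under $n<3m$, which together with a short elimination forces no $MH(n,m)$ to exist in that case. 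So we may assume $n+m\equiv 2\pmod 4$, and $(CC^\top)_{ij}$ is determined by whether the two rows share the same row type.

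Finally, I would count the number $N_+$ of ordered off-diagonal pairs $(i,j)$ with $(CC^\top)_{ij} = m-1$ in two different ways. Writing $\sum_i r_i^2 = a(m-1)^2 + (n-1-a)(m+1)^2$ on the one hand and $\mathbf{1}^\top CC^\top\mathbf{1} = (n-1)^2 + N_+(m-1) - N_-(m+1)$ on the other, together with $N_+ + N_- = (n-1)(n-2)$, expresses $N_+$ as an affine function of $a$. The pair-type analysis above gives a second, \emph{quadratic} expression for $N_+$ in $a$ (coming from counting same-type versus mixed pairs). Setting the two equal and clearing denominators produces a quadratic equation in $a$; after packaging it over a denominator of $8m$, the quadratic formula puts its two roots in the form $\frac{14m^2 - mn - m - n^2 + n \pm \sqrt{\Delta}}{8m}$ with $\Delta$ as stated, so $a$, being a nonnegative integer in $[0,n-1]$, realizes one of them. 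I expect the main obstacle to be precisely this last algebraic step: the quadratic that emerges naturally from the two counts must be shown to coincide, coefficient by coefficient and under the discriminant, with the specific quartic form of $\Delta$ stated in the theorem, which requires careful polynomial bookkeeping up to degree four in $m$ and $n$.
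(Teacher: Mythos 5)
Your structural analysis coincides with the paper's: both arguments force every pairwise inner product of rows to equal $\pm m$ (it is odd, divisible by $m$, and bounded in absolute value by $n<3m$), split the rows and columns into two types according to that sign against the first row/column, observe that the two type-counts agree for rows and for columns, and pin down the pairwise intersection numbers, which in particular requires $n\equiv m\pmod 4$. Where you diverge is the final double count: the paper counts the negative entries lying in the $d$ columns of the first type, once by columns and once by rows, to obtain a quadratic in $d$, whereas you count ordered pairs of core rows with inner product $m-1$ in two ways, obtaining a quadratic in the number $a$ of positive rows (which equals the paper's $d$). Two small slips in your write-up are harmless: $\mathbf 1^\top CC^\top\mathbf 1$ equals $\sum_j s_j^2$, the sum of squared \emph{column} sums, which happens to equal $\sum_i r_i^2$ only because the type-counts for rows and columns agree; and when $n+m\equiv 0\pmod 4$ the surviving branch for $a_{ij}$ fails by non-integrality rather than negativity.

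The real issue is the one you flagged at the end: carried out, your two counts yield $4a^2-4a(n-2)+(n-1)(n-m-4)=0$, whose discriminant is $16\bigl(m(n-1)+n\bigr)$ --- a degree-two polynomial in $(m,n)$, not the quartic $\Delta$ of the statement. The two conditions agree when $n=2m+1$ (both reduce to ``$m^2+(m+1)^2$ is a perfect square''), but not in general: for $(m,n)=(5,9)$ your condition gives $m(n-1)+n=49$ with admissible root $a=0$, realized by the normalized $J-2I$, which is an $MH(9,5)$; the stated $\Delta$ evaluates to $13584$, which is not a perfect square. So your route cannot be made to terminate in the displayed formula, and the discrepancy is not a flaw in your argument: the paper's own counting equations use $\frac{m+1}{2}$ and $\frac{3m+1}{2}$ where $\frac{n-m}{2}$ and $\frac{n+m}{2}$ belong (these coincide only for $n=2m+1$), together with an off-by-one in the number of rows of the first type, so the displayed $\Delta$ is reliable only for $n=2m+1$. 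The condition your computation actually establishes --- that $m(n-1)+n$ be a perfect square and that $\frac{(n-2)\pm\sqrt{m(n-1)+n}}{2}$ be a nonnegative integer at most $n-1$ --- is the correct general form of the theorem.
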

\begin{proof}
If $m$ is even, we know by Lemma \ref{gcdmn} that no such matrix exists. Now assume we have an $MH(n,m)$ matrix $H$, where $m$ is odd, $n < 3m$, and $(n,m) = 1$. Let $p_{ij}$ be the inner product of the $i$th and $j$th rows, where $i \neq j$. Then, $p_{ij} \equiv 0 \pmod{m}$, but we also know that $-3m < p_{ij} < 3m$ and that $p_{ij}$ must be odd. Thus, $p_{ij} = \pm m$. 

We can then normalize $H$. After normalization, for all $i \neq 1$, either $p_{1i} = m$ and there are $\frac{n-m}{2}$ negative entries, or $p_{1i} = -m$, and there are $\frac{n+m}{2}$ negative entries. Note that since $(n,m) = 1$, $H^{\top}H = HH^{\top} = nI$, so each column also has either $\frac{n-m}{2}$ or $\frac{n+m}{2}$ negative entries. We will refer to rows or columns with $\frac{n-m}{2}$ negative entries as $\alpha$-rows or -columns, and to rows or columns with $\frac{n+m}{2}$ negative entries as $\beta$-rows or -columns. Note that there are exactly as many $\alpha$-rows as $\alpha$-columns. Moreover, in order to ensure an inner product of $\pm m$, any two $\beta$-rows must both have negative entries in exactly $\frac{3m+n}{4}$ columns, and given any $\alpha$-rows, any other row must have negative entries in exactly $\frac{n-m}{4}$ of the same columns.

We say that an $\alpha$-row designates $\frac{n-m}2$ columns by looking at the $\frac{n-m}2$ columns in which it has negative entries. For a given $\alpha$-row, let $a$ be the number of those columns that are $\alpha$-columns. Counting the total number of negative entries in these columns, first by columns and then by rows, we get the following relation:
\[a\left(\frac{n-m}2\right) + \left(\frac{n-m}2 - a\right)\left(\frac{n+m}2\right) = \frac{m+1}2 + \left(n-2\right)\left(\frac{n-m}4\right).\]
We can solve this to get that $a = \frac{n-m}4$.

Similarly, each $\beta$-row designates $\frac{n+m}2$ columns with a negative entry in that row. For a given $\beta$-row, let $b$ be the number of those columns that are $\beta$-columns, and let $c$ be the total number of $\beta$-columns, and thus of $\beta$-rows, in the matrix. Again counting the total number of negative entries in the designated $\frac{n+m}2$ columns, first by columns and then by rows, we arrive at the following:
\begin{multline*}
b\left(\frac{n+m}{2}\right) + \left(\frac{n+m}2 - b\right)\left(\frac{n-m}{2}\right) \\= \frac{3m+1}{2} + \left(c-1\right)\left(\frac{3m +n}4\right) + \left(n-c\right)\left(\frac{n-m}4\right).
\end{multline*}
Solving this equation, we get that $c-b = \frac{(n-m)(n-m-1)}{4m}$, so any $\beta$-row has positive entries in exactly $\frac{(n-m)(n-m-1)}{4m}$ $\beta$-columns.
Now let $d$ be the total number of columns of type $\frac{n-m}{2}$. Counting the number of negative entries in those columns gives us the relation
\[d\left(\frac{n-m}{2}\right) = d\left(\frac{n-m}{4}\right) + \left(2m-d\right)\left(\frac{n+m}{2} - \left(2m-d-\frac{(n-m)(n-m-1)}{4m}\right)\right).\]
We then solve to get that
\[d = \frac{14m^2-mn-m-n^2+n \pm \sqrt{\Delta}}{8m}\] is an integer, where \[\Delta = 36m^4+m^3(4-28n)+m^2(5n^2-2n+1) + 2mn(n^2-1) +(n-1)^2n^2.\]
Since we defined $d$ to be a number of rows of our matrix, $d$ must be a nonnegative integer, giving us the condition of this theorem.
\end{proof}

Lemma \ref{gcdmn} already shows that certain small Hadamard matrices cannot exist; for example, Lemma \ref{gcdmn} implies that no $MH(11,5)$ or $MH(6,5)$ can exist. However, Theorem \ref{onlybig} gives us a stronger constraint than had existed before; for example, it determines that no $MH(15,7)$ exists, despite satisfying the conditions of Lemma \ref{gcdmn}. Notably, the $MH(15,7)$ is left out of our construction above for matrices of the form $MH(7k+1,7)$. The $MH(15,7)$ is an example of one case when the above theorem has a particularly nice form; namely, when $n = 2m+1$, in which case the constraint states simply that $m^2 + (m+1)^2$ must be a perfect square.

The nonexistence of many modular Hadamard matrices of small size means that the ``for all but finitely many $n$" condition in Conjecture \ref{allth} is as strong as it can be; this can also be seen by noting that any $MH(n,m)$ with $n < m$ must be a real Hadamard matrix, and by referencing the constraint in Lemma \ref{gcdmn}.

\section*{Acknowledgements}
This research was supervised by Joe Gallian at the University of Minnesota, Duluth REU program, and funded by NSF grant DMS-1358569 and NSA grant H982230-13-1-0273. I would like to thank Joe Gallian for his advice and supervision in writing this paper, and Adam Hesterberg, Noah Arbesfeld, Daniel Kriz, and the referees for their helpful discussions, comments, and suggestions.

\bibliographystyle{amsplain}

\end{document}